\def\@listI{\leftmargin\leftmargini
    \parsep\parskip \itemsep -\parsep
    \itemsep 2pt}
\let\@listi\@listI
\theoremstyle{definition}
\newtheorem{definition}{Definition}[section]
\newtheorem{example}[definition]{Example}
\newtheorem{remark}[definition]{Remark}
\theoremstyle{plain}
\newtheorem{lemma}[definition]{Lemma}
\newtheorem{proposition}[definition]{Proposition}
\newtheorem{theorem}[definition]{Theorem}
\newtheorem{corollary}[definition]{Corollary}
\newcommand\A{{\mathbf A}}
\newcommand\B{{\mathbf B}}
\newcommand\G{{\mathbf G}}
\newcommand\I{{\mathbf I}}
\newcommand\M{{\mathbf M}}
\newcommand\3{{\mathbf 3}}
\renewcommand\S{{\mathbf S}}
\newcommand\two{\mathbf B_2}
\newcommand\WK{{\mathbf{WK}}}
\newcommand\HomGR{\mathrm{Hom_{_{GR}}}(\A,\3)}
\newcommand\Homb{\mathrm{Hom_{_{b}}}(\S,\3)}
\newcommand\IBSL{\ensuremath{\mathcal{IBSL}}\xspace}
\newcommand\PWK{\ensuremath{\mathrm{PWK}}\xspace}
\DeclareMathAlphabet{\mathbfsf}{\encodingdefault}{\sfdefault}{bx}n
\providecommand*{\Dashv}{\mathrel{\mathpalette\@Dashv\vDash}}
\newcommand*{\@Dashv}[2]{\reflectbox{$\m@th#1#2$}}
\renewcommand\geq{\geqslant}
\newcommand\leqs{\leqslant}
\newcommand\pair[1]{{\langle#1\rangle}}
\newcommand\PL{{\mathcal{P}_{l}}}
\newcommand\PLA{{\mathcal{P}_{l} (\mathbb{A})}}
\newcommand\PLB{{\mathcal{P}_{l} (\mathbb{B})}}
\newcommand\cIBSL{{\mathfrak{IBSL}}}
\newcommand\SA{{\mathfrak{SA}}}
\newcommand\phih{{\varphi_{_h}}}
\begin{document}

\title[A duality for involutive bisemilattices] {A duality for involutive bisemilattices}

\author[S.Bonzio]{Stefano Bonzio}

\author[A.Loi]{Andrea Loi}

 \author[L.Peruzzi]{Luisa Peruzzi}


\subjclass[2000]{Primary 08C20. Secondary 06E15, 18A99, 22A30.}
\keywords{duality, bisemilattice, Stone space, Boolean algebra, P\l onka sum.}
\date{\today}

\begin{abstract}
We establish a duality between the category of involutive bisemilattices and the category of semilattice inverse systems of Stone spaces, using Stone duality from one side and the representation of involutive bisemilattices as P\l onka sum of Boolean algebras, from the other.
Furthermore, we show that the dual space of an involutive bisemilattice can be axiomatized as a GR space with involution, 
a generalization of the spaces introduced by Gierz and Romanowska,
equipped with an involution as additional operation.
\end{abstract}

\maketitle

\section{Introduction}

It is a common trend in mathematics to study dualities for general algebraic structures and, in particular, for those arising from mathematical logic. The first step towards this direction traces back to the pioneering work by Stone for Boolean algebras \cite{Stone37}. Later on, Stone duality has been extended to the more general case of distributive lattices by Priestley \cite{Priestley72}. The two above mentioned are the prototypical examples of dualities obtained via \emph{dualizing} objects and will be both recalled and constructively used in the present work. 

These kind of dualities have an intrinsic value: they are indeed a way of describing the very same mathematical object from two different perspectives, the target category and its dual. More generally, dualities between algebraic structures and corresponding topological spaces may open the way to applications as algebraic problems can possibly be translated into topological ones, or new insights can be obtained via the representation of a particular algebra as an algebra of continuos functions over a certain space (for a more detailed exposition of applications see \cite{Davey1993,clark1998natural}).

The starting point of our analysis is the duality established by Gierz and Romanowska \cite{Romanowska} between distributive bisemilattices and compact totally disconnected partially ordered left normal bands with constants, which, for sake of compactness, we refer to as GR spaces. The duality is obtained via the usual strategy of finding a suitable candidate to play the role of dualizing, or schizofrenic, object. However, the relevance of the result lies mainly in the use, for the first time, of P\l onka sums as an essential tool for stating the duality.

Our aim is to provide a duality between the categories of involutive bisemilattices and those topological spaces, here christened as GR spaces with involution. The former consists of a class of algebras introduced and extensively studied in \cite{Bonzio16} as algebraic semantics (although not equivalent) for  paraconsistent weak Kleene logic. The logical interests around these structures is relatively recent; on the other hand, it is easily checked that involutive bisemilattices, as introduced in \cite{Bonzio16}, are equivalent to the regularization of the variety of Boolean algebras, axiomatized by P\l onka \cite{Plonka69,Plo84}.\footnote{The authors were not aware of some of the mentioned results by P\l onka when writing \cite{Bonzio16}.} For this reason, involutive bisemilattices are strictly connected to Boolean algebras as they are representable as P\l onka sums of Boolean algebras.
 
The present work consists of two main results. On the one hand, taking advantage of the P\l onka sums representation in terms of Boolean algebras and Stone duality, we are able to describe the dual space of an involutive bisemilattice as semilattice inverse systems of Stone spaces (Theorem \ref{th: dualita ibsl}). On the other hand, we generalize Gierz and Romanowska duality by considering GR spaces with involution as an additional operation (Theorem \ref{th: stron-inv SA equivalente a GR inv}).  As a byproduct of our analysis we get a topological description of \emph{semilattice inverse systems} of Stone spaces (Corollary \ref{corollario principale}).\\
The paper is structured as follows. In Section \ref{sec: preliminari} we summarize all the necessary notions and known results about bisemilattices, Gierz and Romanowska duality and involutive bisemilattices. In Section \ref{sec: dir,inv systems} we the categories of \emph{semilattice} direct and inverse systems, proving that, when constructed using dually equivalent categories, they are also dually equivalent. In Section \ref{sec: dualita}, we introduce GR spaces with involution and prove the main results.  Finally, in Section \ref{sec: commenti} we make some considerations about categories admitting both topological duals and a representation in terms of P\l onka sums. By using Priestley duality, we then extend our results to the category of distributive bisemilattices.

\section{Preliminaries}\label{sec: preliminari}

A \emph{distributive bisemilattice} is an algebra $\mathbf{A}=\langle A, +, \cdot\rangle $ of type $\langle 2,2\rangle$ such that both $ + $ and $\cdot$ are idempotent, associative and commutative operations and, moreover, $+$ ($\cdot$ respectively) distributes over $\cdot$ (+ respectively). Distributive bisemilattices, originally called  ``quasi-lattices'', have been introduced by P\l onka \cite{Plo67a}; nowadays, similar structures are studied in a more general setting under the name of Birkhoff systems (see \cite{Harding2017}, \cite{Harding20172}). Throughout the paper we will refer to these algebras simply as \emph{bisemilattices}. Observe that every distributive lattice is an example of bisemilattice and every semilattice is a bisemilattice, where the two operations coincide. Any bisemilattice induces two different partial orders, namely $x\leq_{_\cdot} y$ iff $x\cdot y = x$ and $x\leq_{_+} y$ iff $x+y = y$.
\begin{example}\label{ex: algebra 3}
The 3-element algebra $\mathbf{3}=\langle \{0,1,\alpha\},\cdot,+\rangle $, whose operations are defined by the so-called \emph{weak Kleene tables} (given below), is the most prominent example of bisemilattice, as it generates the variety of (distributive) bisemilattices \cite{Kal71}.

\vspace{10pt}
\begin{center}\renewcommand{\arraystretch}{1.25}
\begin{tabular}{>{$}c<{$}|>{$}c<{$}>{$}c<{$}>{$}c<{$}}
   \cdot & 0 & \alpha & 1 \\[.2ex]
 \hline
       0 & 0 & \alpha & 0 \\
       \alpha & \alpha & \alpha & \alpha \\          
       1 & 0 & \alpha & 1
\end{tabular}
\qquad
\begin{tabular}{>{$}c<{$}|>{$}c<{$}>{$}c<{$}>{$}c<{$}}
   + & 0 & \alpha & 1 \\[.2ex]
 \hline
     0 & 0 & \alpha & 1 \\
     \alpha & \alpha & \alpha & \alpha\\          
     1 & 1 & \alpha & 1
\end{tabular}
\end{center}

\vspace{10pt}

The two partial orders induced by $\3$ are displayed in the following Hasse diagrams:
\begin{figure}[h]
\begin{tikzpicture}\label{fig:1}

\draw (0,0) -- (0,1.5); 
\draw (0,1.5) -- (0,3); 
\draw (0,0) node {$\bullet$};
\draw (-0.5, 0) node {0};
\draw (0,1.5) node {$ \bullet$};
\draw (-0.5, 1.5) node {1};
\draw (0.6, 1.1) node {$\leq_{+}$};
\draw (0,3) node {$\bullet$};
\draw (-0.5, 3) node {$\alpha$}; 

\draw (4,0) -- (4,1.5); 
\draw (4,1.5) -- (4,3); 
\draw (4,0) node {$\bullet$};
\draw (4.5,0) node {$\alpha$};
\draw (4,1.5) node {$ \bullet$};
\draw (4.5,1.5) node {0};
\draw (3.4, 1.1) node {$\leq_{\cdot}$};
\draw (4,3) node {$\bullet$};
\draw (4.5, 3) node {1};

\end{tikzpicture}
\end{figure}
\end{example} 

\vspace{20pt}


A duality for bisemilattices has been established in \cite{Romanowska}, by using $\3$ as \emph{dualizing} object. We recall here all the notions needed to state the main result.\\
A \emph{left normal band} is an idempotent semigroup $\langle A, \ast \rangle$ satisfying the additional identity $x\ast (y\ast z)\approx x\ast (z\ast y)$, which is weak form of commutativity. A left normal band can be equipped with a partial order. 
\begin{definition}\label{def: p.o.l.n.b.}
A \emph{partially ordered left normal band} is an algebra $\mathbf{A}=\langle A, \ast, \leq \rangle $ such that 
\begin{itemize}
\item[i)] $\langle A, \ast \rangle $ is a left normal band
\item[ii)] $\langle A, \leq\rangle $ is a partially ordered set
\item[iii)] if $ x\leq y $ then $x\ast z\leq y\ast z$ and $z\ast x\leq z\ast y$
\item[iv)] $x\ast y \leq x$
\end{itemize}
\end{definition}

In any partially ordered left normal band it is possible to define a second partial order via $\ast $ and $\leq$: $ a\sqsubseteq b $ iff $a\ast b\leq b$ and $b\ast a = b$. A partially ordered left normal band may be also extended by adding constants. 
\begin{definition}\label{def: left normal band with constants}
A \emph{partially ordered left normal band with constants} is an algebra $\mathbf{A}=\langle A, \ast , \leq , c_0 , c_1 , c_{\alpha}\rangle$ such that $\langle A, \ast , \leq \rangle$ is a partially ordered left normal band and $c_0$, $c_1$ and $c_\alpha$ are constants satisfying
\begin{itemize}
\item[(1)] $x\ast c_{\alpha} = c_{\alpha}\ast x = c_{\alpha}$
\item[(2)] $x\ast c_0 = x\ast c_1 = x$
\item[(3)] $c_0\sqsubseteq x\leq c_1 $ and $c_{\alpha} \leq x \sqsubseteq c_{\alpha}$
\item[(4)] if $c_0\ast x = c_1 \ast x$ then $x=c_{\alpha}$
\end{itemize}
\end{definition}

\begin{definition}\label{def: GR-space}
A \emph{GR space} is a structure $\mathbf{A} =\langle A, \ast, \leq, c_0, c_1, c_\alpha , \tau \rangle $, such that $\langle A, \ast , \leq , c_0 , c_1 , c_\alpha \rangle $ is a partially ordered left normal band with constants and $\tau$ is a topology making $ \ast:A\times A \rightarrow A $ a continous map and $ \langle A, \leq , \tau \rangle $ is a compact  \emph{totally order disconnected} space\footnote{A topological space is totally order disconnected if (1) $\{ (a,b)\in A\times A: a\leq b \}$ is closed; (2) if $a\not\leq b$ then there is an open and closed lower set $ U $ such that $ b\in U $ and $ a\not\in U $.}. 
\end{definition}

\begin{example}
The support set of $\mathbf{3} $, namely $\{0, 1,\alpha\}$ equipped with the discrete topology, where $ \leq\;\equiv \;\leq_{\cdot}$, $ c_0 = 0 $, $c_1 = 1$, $c_\alpha = \alpha $ and $ \ast $ is defined as follows: $$a\ast b=\begin{cases} a & \mbox{if }b\neq \alpha \\ b & \mbox{otherwise }
\end{cases} $$
is a GR space (it is not difficult to check that operation $a\ast b = a + a\cdot b = a\cdot (a+b)$ and that the induced order $\sqsubseteq$ coincides with $\leq_{_+}$). 
\end{example}
We call $\mathfrak{DB}$ the category of bisemilattices (whose morphisms are homomorphisms of bisemilattices) and $\mathfrak{GR}$ the category of GR spaces (whose morphisms are continuous maps preserving $\ast$, constants and the order).  
The above mentioned duality is stated as follows:
\begin{theorem}\cite[Theorem 7.5]{Romanowska}\label{th: dualita Romanowska}
The categories $\mathfrak{DB}$ and $\mathfrak{GR}$ are dual to each other under the invertible functor $\mathrm{Hom_{b}}(-,\3):\mathfrak{DB}\to\mathfrak{GR}$ and its inverse $\mathrm{Hom_{_{GR}}}(-,\3):\mathfrak{GR}\to\mathfrak{DB}$.
\end{theorem}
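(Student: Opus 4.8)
The plan is to treat $\3$ as a dualizing (schizophrenic) object: by Example \ref{ex: algebra 3} it is a bisemilattice, and by the example following Definition \ref{def: GR-space} the same three-element set carries the structure of a GR space. Accordingly I would define the contravariant functor $\mathrm{Hom_{b}}(-,\3)$ by sending a bisemilattice $\A$ to the set of all bisemilattice homomorphisms $\A\to\3$, topologised as a subspace of the power $\3^{A}$ (with $\3$ discrete and the product topology) and equipped with the operation $\ast$, the order $\leqs$ and the constants $c_0,c_1,c_\alpha$ computed pointwise from the GR-space structure of $\3$. Dually, $\mathrm{Hom_{_{GR}}}(-,\3)$ sends a GR space $\mathbf X$ to the set of GR-morphisms $\mathbf X\to\3$ with the pointwise bisemilattice operations inherited from $\3$. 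On arrows both functors act by precomposition.

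The first block of work is to check that these assignments are well defined and functorial. For $\mathrm{Hom_{b}}(\A,\3)$ one must verify that the pointwise $\ast$, order and constants satisfy the axioms of Definitions \ref{def: p.o.l.n.b.} and \ref{def: left normal band with constants}, and that the hom-set is a closed subspace of $\3^{A}$, hence compact; total order disconnectedness is then inherited from the product of discrete copies of $\3$. Symmetrically, one checks that the pointwise operations on $\mathrm{Hom_{_{GR}}}(\mathbf X,\3)$ are idempotent, commutative, associative and mutually distributive, so that the output is a genuine bisemilattice. Precomposition by a homomorphism (resp.\ a GR-morphism) visibly preserves all the relevant structure, which gives functoriality in both directions.

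With the functors in hand, the core of the argument is to exhibit the two evaluation natural transformations and prove they are isomorphisms. I would set $\eta_{\A}\colon\A\to\mathrm{Hom_{_{GR}}}(\mathrm{Hom_{b}}(\A,\3),\3)$ by $\eta_{\A}(a)(h)=h(a)$, and the analogous $\varepsilon_{\mathbf X}\colon\mathbf X\to\mathrm{Hom_{b}}(\mathrm{Hom_{_{GR}}}(\mathbf X,\3),\3)$, and then establish that each is a morphism in the appropriate category (a routine pointwise computation), that each is injective, and that each is surjective. Injectivity of $\eta_{\A}$ is precisely the statement that the homomorphisms $\A\to\3$ separate the points of $\A$; since $\3$ generates the variety of bisemilattices \cite{Kal71}, every $\A$ is a subdirect product of subalgebras of $\3$, and the associated projections supply the required separation, while injectivity of $\varepsilon_{\mathbf X}$ follows from the separation built into the total order disconnectedness of a GR space.

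The main obstacle is surjectivity, i.e.\ that every GR-morphism out of $\mathrm{Hom_{b}}(\A,\3)$ is an evaluation $\eta_{\A}(a)$, and dually for $\varepsilon$. This is where the P\l onka-sum representation does the real work: a bisemilattice is a P\l onka sum of a semilattice-indexed family of distributive lattices (Boolean algebras, dualised by Stone duality, in the involutive case treated later), and under $\mathrm{Hom_{b}}(-,\3)$ this decomposition is matched by a fibring of the dual GR space over the semilattice replica encoded by the constants and the induced order $\sqsubseteq$. I would therefore reduce surjectivity \emph{fibrewise} to the classical reflexivity of Priestley duality for distributive lattices, where density of evaluations on a compact totally order disconnected space is standard, and then reassemble the fibres, using compactness to control the global section selected by an arbitrary morphism. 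Naturality of $\eta$ and $\varepsilon$ is a direct diagram chase; together with the bijectivity just described it shows that $\mathrm{Hom_{b}}(-,\3)$ and $\mathrm{Hom_{_{GR}}}(-,\3)$ are mutually inverse dual equivalences between $\mathfrak{DB}$ and $\mathfrak{GR}$, as claimed.
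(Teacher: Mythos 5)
First, note that the paper itself offers no proof of this statement: it is imported verbatim from Gierz and Romanowska as \cite[Theorem 7.5]{Romanowska}, so there is no internal argument to compare yours against, and your proposal is in effect a reconstruction of the original proof. Its skeleton is the right one and does match the known strategy: $\3$ as schizophrenic object, the hom-sets topologised as closed subspaces of $\3^{A}$ with pointwise structure, evaluation maps $\eta_{\A}$ and $\varepsilon_{\mathbf X}$, and injectivity of $\eta_{\A}$ from Kalman's subdirect decomposition \cite{Kal71} (all three subdirectly irreducible distributive bisemilattices embed in $\3$, so homomorphisms into $\3$ do separate points).

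There are, however, two genuine gaps at exactly the places where the real work lies. First, injectivity of $\varepsilon_{\mathbf X}$ does \emph{not} ``follow from the separation built into total order disconnectedness'': that condition only supplies clopen up-sets, i.e.\ continuous \emph{order-preserving} maps into a two-element chain, whereas you need maps that additionally preserve $\ast$ and the three constants $c_0, c_1, c_\alpha$. Producing enough such GR-morphisms is a substantive separation lemma in \cite{Romanowska}, not an automatic consequence of the topology; this is precisely where the axioms (1)--(4) of Definition \ref{def: left normal band with constants} earn their keep. Second, the fibrewise surjectivity reduction is misstated: the dual of a P\l onka sum $\PL\pair{\A_i,p_{ii'},I}$ does not fibre over the index semilattice $I$ itself, but over the \emph{character space} of $I$ (its Hofmann--Mislove--Stralka dual, cf.\ \cite{hofmann1974pontryagin}), and the fibre over a character $\chi$ is the Priestley dual of the direct \emph{limit} of the lattice fibres along $\chi^{-1}(1)$ --- compare Theorem \ref{th: Haimo} in the Boolean case and the Sem-inv machinery of Section \ref{sec: dir,inv systems}. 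Since the character space is in general much larger than $I$, and since the topology on $\mathrm{Hom_{b}}(\A,\3)$ is not the coproduct topology of these fibres, ``reassembling the fibres, using compactness'' is not a routine step but the delicate heart of the argument; as written, both the existence of enough morphisms on the space side and the claim that every element of the double dual is an evaluation are asserted rather than proven.
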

  
In detail, given a bisemilattice $\mathbf{S}$, its dual GR space is $ \widehat{\mathbf{S}} = \mathrm{Hom_{b}}(\mathbf{S},\mathbf{3}) $, i.e. the space of the homomorphisms (of bisemilattices) from $ \mathbf{S} $ to $\mathbf{3}$. Analogously, if $ \mathbf{A} $ is a GR space, then the dual is given by $ \widehat{\mathbf{A}}=\mathrm{Hom_{_{GR}}}(\mathbf{A},\mathbf{3}) $, the bisemilattice of morphisms of $\mathfrak{GR}$. \\

The isomorphism between $\S$ and $\widehat{\widehat{\S}}$ is given by: 
\begin{equation}\label{eq: isomorfismo S, Scapcap}
\varepsilon_{_S}: \S\to\widehat{\widehat{\S}}, x\mapsto \varepsilon_{_S}(x), \varepsilon_{_S}(x)(\varphi)=\varphi (x),  
\end{equation}
for every $x\in S$ and $\varphi\in \widehat{S}$. 

Analogously, for $\A$ and $\widehat{\widehat{\A}}$, the isomorphism is given by:
\begin{equation}\label{eq: isomorfismo A, Acapcap}
\delta_{_A}: \A\to\widehat{\widehat{\A}}, x\mapsto \delta_{_A}(x), \delta_{_A}(x)(\varphi)=\varphi (x), 
\end{equation} 
for every $x\in A$ and $\varphi\in \widehat{A}$. 

The class of \emph{involutive bisemilattices} has been introduced in \cite{Bonzio16} as the most suitable candidate to be the algebraic counterpart of PWK logic. 

\begin{definition}\label{def: IBSL}
\normalfont
An \emph{involutive bisemilattice} is an algebra $\B = \pair{B,\cdot,+,^{'},0,1}$ of type $(2,2,1,0,0)$ satisfying:
\vspace{5pt}
\begin{enumerate}[label=\textbf{I\arabic*}.]
\item $x + x\approx x$;
\item $x + y\approx y+ x$;
\item $x+(y+ z)\approx (x+ y)+ z$;
\item $(x')'\approx x$;
\item $x\cdot y\approx ( x'+ y')'$;
\item $x\cdot( x'+ y)\approx x\cdot y$; \label{rmp}
\item $0+ x\approx x$;
\item $1\approx 0'$.
\end{enumerate}
\end{definition}
We denote the variety of involutive bisemilattices by $\mathcal{IBSL}$. 

Every involutive bisemilattice has, in particular, the structure of a join semilattice with zero, in virtue of axioms~(I1)--(I3) and~(I7). More than that, it is possible to prove \cite[Proposition 20]{Bonzio16} that $ \cdot $ distributes over $ + $ and viceversa, therefore the reduct $\pair{B,+,\cdot}$ is a bisemilattice. 
Notice that, in virtue of axioms~(I5) and~(I8), the operations $\cdot$ and $1$ are completely determined by $+$, $^{'}$, and $0$. 
It is not difficult to check that every involutive bisemilattice has also the structure of a meet semilattice with $1$, and that the equations $x+ y \approx ( x'\cdot y')' $, 
$x+ y \approx x+( x'\cdot y) $ 
are satisfied. There are different equivalent ways to define involutive bisemilattices: it is not difficult to check that $\IBSL$ corresponds to the regularization of the variety of Boolean algebras described in \cite{Pl'onka84}.

\begin{example}
\normalfont
Every Boolean algebra, in particular the 2-element Boolean algebra $\two$, is an involutive bisemilattice. Also, the 2-element semilattice with zero, which we call $\S_2$, endowed with identity as its unary fundamental operation, is an involutive bisemilattice. The most prominent example of involutive bisemilattice is the 3-element algebra $\WK$, which is obtained by expanding the language of $\mathbf{3}$ with an involution behaving as follows:
\vspace{10pt}
\begin{center}\renewcommand{\arraystretch}{1.25}
\begin{tabular}{>{$}c<{$}|>{$}c<{$}}
  ^{'} &  \\[.2ex]
\hline
  1 & 0 \\
  \alpha & \alpha \\
  0 & 1 \\
\end{tabular}
\end{center}
\vspace{15pt}

 Upon considering the partial order $\leq_{\cdot}$ induced by the product in its bisemilattice reduct, it becomes a 3"-element chain with $ \alpha$ as its bottom element. $ \two $, $ \S_2 $ and $\WK $ can be represented by means of the following Hasse diagrams (the dashes represent the order, while the arrows represent the negation):
\[
\two = \begin{tikzcd}
  1 \arrow[d, bend left = 60, leftrightarrow] \\ 0 \arrow[u, dash]
  \end{tikzcd}
  \qquad\qquad
\S_2 =  \begin{tikzcd}
 a \arrow[loop right] \\ 1 = 0\arrow[u, dash]\arrow[loop right]
  \end{tikzcd}
  \qquad\qquad
\WK = \begin{tikzcd}[row sep = tiny]
 1 \arrow[d, bend left = 70, leftrightarrow] \\ 0\arrow[u, dash]  \\ \alpha \arrow[loop right] \arrow[u, dash] 
  \end{tikzcd}
\]
\vspace{10pt}
It is not difficult to verify that $ \two $ is a subalgebra of $ \WK $, while $ \S_2 $ is a quotient.
\end{example}
Although the algebra $ \WK $ allows to define the logic $ \PWK $ (upon setting $\{ 1, \alpha \}$ as designated values), its relevance is a consequence also of the fact that it generates the variety $ \mathcal{IBSL} $, \cite[Corollary 31]{Bonzio16}. This result can be also proved, observing that involutive bisemilattices coincide with the regularization of Boolean algebras, axiomatized in \cite{Plo84}, and, due to \cite{Lakser72}, the only subdirectly irreducible members of the class are $ \two $, $ \S_2 $ and $\WK $.

As the main focus of this paper is introducing a duality for involutive bisemilattices, it is useful to recall here the definition of dual categories. We assume the reader is familiar with the concepts of category and morphism (in a category).
\begin{definition}\label{def: equivalenza di categorie}
Two categories $\mathfrak{C}$ and $\mathfrak{D}$ are \emph{equivalent} provided there exist two covariant functors, $\mathcal{F}:\mathfrak{C}\to\mathfrak{D}$ and $\mathcal{G}:\mathfrak{D}\to\mathfrak{C}$ such that $ \mathcal{G}\circ\mathcal{F} = id_{\mathfrak{C}} $ and $ \mathcal{F}\circ\mathcal{G} = id_{\mathfrak{D}} $.
\end{definition}

Whenever the functors considered in the above definition are \emph{controvariant} (instead of covariant), the two categories $\mathfrak{C}$ and $\mathfrak{D}$ are said to be \emph{dually equivalent} or, briefly, \emph{duals}.

\section{The categories of semilattice inverse and direct systems}\label{sec: dir,inv systems}

In this section we are going to describe a very general procedure to construct dualities for algebraic structures admitting a P\l onka sum representation.

For our purposes, we need to strengthen the well known concepts of inverse and direct system of a category, hence we introduce the notions of semilattice inverse and semilattice direct systems in a very direct and way. For sake of simplicity, we opt for presenting these topics following the current trend in algebraic topology (see \cite{shapebook} for details).

\begin{definition}\label{def: Sem inverse system}
Let $ \mathfrak{C} $ be an arbitrary category, a \emph{semilattice inverse system} in the category $ \mathfrak{C} $ is a tern $\mathcal{X}= \pair{X_{i}, p_{ii'}, I} $ such that
\vspace{3pt} 
\begin{itemize}
\item[(i)] $ I $ is a join semilattice with lower bound;
\item[(ii)] for each $ i\in I $, $ X_i $ is an object in $ \mathfrak{C}$;
\item[(iii)] $p_{ii'} : X_{i'} \to X_i$ is a morphism of $ \mathfrak{C} $, for each pair $i\leqs i'$, satisfying that $p_{ii} $ is the identity in $ X_i $ and such that $ i\leq i'\leq i'' $ implies $ p_{ii'} \circ p_{i'i''}  = p_{ii''}$. 
\end{itemize}
\end{definition}

$ I $ is called the \emph{index set} of the system $ \mathcal{X} $, $ X_i $ are the \emph{terms} and $ p_{ii'} $ are referred to as \emph{bonding morphisms} of $ \mathcal{X} $.  For convention, we indicate with $ \vee $ the semilattice operation on $ I $, $ \leq $ the induced order and $ i_0$ the lower bound in $ I $.

The only difference making an inverse system a semilattice inverse system is the requirement on the index set to be a semilattice with lower bound instead of a directed preorder. 

\begin{definition}\label{def: morfismi di Sem inv}
Given two semilattice inverse systems $ \mathcal{X}=\pair{X_{i}, p_{ii'}, I} $ and $ \mathcal{Y}=\pair{Y_{j}, q_{jj'}, J} $, a \emph{morphism} between $ \mathcal{X} $ and $\mathcal{Y} $ is a pair $  (\varphi, f_j) $ such that
\vspace{3pt}
\begin{itemize}
\item[i)]  $ \varphi :J\rightarrow I $ is a semilattice homomorphism;
\item[ii)] for each $ j\in J $, $ f_j: X_{\varphi(j)}\rightarrow Y_{j} $ is a morphism in $ \mathfrak{C} $, such that whenever $ j\leq j' $, then the diagram in Fig.3 commutes. 
\begin{center}
\begin{figure}[h]\label{fig: diagramma semilattice inv systems}
\begin{tikzpicture}
\draw (-4,0) node {$ Y_{j} $};
	\draw (4,0) node {$ Y_{j'} $};
	
	\draw [line width=0.8pt, <-] (-3.6,0) -- (3.6,0);
	\draw (0,-0.4) node {\begin{footnotesize}$q_{jj'}$\end{footnotesize}};
	
	\draw [line width=0.8pt, ->] (3.3,3) -- (-3.3,3);
	\draw (0, 3.3) node {\begin{footnotesize}$p_{\varphi(j)\varphi(j')}$\end{footnotesize}};

	\draw (-4,3) node {$X_{\varphi(j)}$}; 
	
	\draw (4,3) node {$X_{\varphi(j')}$};
		
	\draw [line width=0.8pt, ->] (-4,2.6) -- (-4,0.4);
	\draw (-4.6,1.7) node {\begin{footnotesize}$f_j$\end{footnotesize}};
	\draw (4.6,1.7) node {\begin{footnotesize}$f_{j'}$\end{footnotesize}};
	\draw [line width=0.8pt, <-] (4,0.4) -- (4,2.6);
	
\end{tikzpicture}
\caption{The commuting diagram defining morphisms of semilattice inverse systems.}
\end{figure}
\end{center}
\end{itemize}
\end{definition}
Notice that, for morphisms of semilattice inverse systems, 
the assumption that $ \varphi:J\rightarrow I $ is a (semilattice) homomorphism implies that whenever $ j\leq j' $ then $ \varphi(j)\leq \varphi(j') $.
Given three semilattice inverse systems $ \mathcal{X}=\pair{X_{i}, p_{ii'}, I} $, $ \mathcal{Y}=\pair{Y_{j}, q_{jj'}, J} $, $ \mathcal{Z} = \pair{Z_{k}, r_{kk'}, K} $, the composition of morphisms is defined in the same way as for inverse systems.
\begin{lemma}\label{lem: composizione di morfismi}
The composition of morphisms between semilattice inverse systems is a morphism.
\end{lemma}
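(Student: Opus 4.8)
The plan is to verify directly that the pair obtained by composing two morphisms of semilattice inverse systems satisfies the two defining conditions of Definition \ref{def: morfismi di Sem inv}. Suppose we are given morphisms $(\varphi, f_j)\colon \mathcal{X}\to\mathcal{Y}$ and $(\psi, g_k)\colon \mathcal{Y}\to\mathcal{Z}$, where $\mathcal{X}=\pair{X_i,p_{ii'},I}$, $\mathcal{Y}=\pair{Y_j,q_{jj'},J}$ and $\mathcal{Z}=\pair{Z_k,r_{kk'},K}$. Here $\varphi\colon J\to I$ and $\psi\colon K\to J$ are semilattice homomorphisms, $f_j\colon X_{\varphi(j)}\to Y_j$ and $g_k\colon Y_{\psi(k)}\to Z_k$ are morphisms of $\mathfrak{C}$. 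The natural candidate for the composite is the pair $(\varphi\circ\psi,\, g_k\circ f_{\psi(k)})$, where the first component is the composite map $K\to I$ and the second component assigns to each $k\in K$ the morphism $g_k\circ f_{\psi(k)}\colon X_{\varphi(\psi(k))}\to Z_k$ in $\mathfrak{C}$.

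First I would check condition (i): $\varphi\circ\psi\colon K\to I$ is a semilattice homomorphism, which is immediate since the composition of two semilattice homomorphisms is again a semilattice homomorphism. Next I would verify that the second component is well typed: since $f_{\psi(k)}$ goes from $X_{\varphi(\psi(k))}$ to $Y_{\psi(k)}$ and $g_k$ goes from $Y_{\psi(k)}$ to $Z_k$, the composite $g_k\circ f_{\psi(k)}$ indeed has source $X_{(\varphi\circ\psi)(k)}$ and target $Z_k$, as required.

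The substantive part is condition (ii): for $k\leq k'$ in $K$, the relevant square built from $p_{(\varphi\psi)(k)\,(\varphi\psi)(k')}$, $r_{kk'}$, and the two composite morphisms must commute. The strategy here is to paste together the two commuting squares that come for free from the hypotheses. Because $\psi$ is a homomorphism, $k\leq k'$ gives $\psi(k)\leq\psi(k')$, so the square for $(\psi,g_k)$ at the pair $\psi(k)\leq\psi(k')$ commutes: $r_{kk'}\circ g_{k'} = g_k\circ q_{\psi(k)\psi(k')}$. Likewise, since $\varphi$ is a homomorphism, $\psi(k)\leq\psi(k')$ gives $\varphi(\psi(k))\leq\varphi(\psi(k'))$, so the square for $(\varphi,f_j)$ at the pair $\psi(k)\leq\psi(k')$ commutes: $q_{\psi(k)\psi(k')}\circ f_{\psi(k')} = f_{\psi(k)}\circ p_{\varphi(\psi(k))\varphi(\psi(k'))}$. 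I would then compute $r_{kk'}\circ(g_{k'}\circ f_{\psi(k')})$ and push $r_{kk'}$ through the first square and then $q_{\psi(k)\psi(k')}$ through the second, obtaining $(g_k\circ f_{\psi(k)})\circ p_{(\varphi\psi)(k)(\varphi\psi)(k')}$, which is exactly what commutativity of the composite square demands. I do not expect any genuine obstacle: the argument is a routine diagram chase, and the only point requiring minor care is bookkeeping of indices, ensuring that the monotonicity of $\varphi$ and $\psi$ is invoked precisely so that the two bonding-morphism squares one wants to paste actually exist and have matching edges.
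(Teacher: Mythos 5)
Your proof is correct and follows essentially the same route as the paper: define the composite as $(\varphi\circ\psi,\ g_k\circ f_{\psi(k)})$, observe that a composition of semilattice homomorphisms is again a semilattice homomorphism, and verify condition (ii) by pasting the two commuting squares supplied by $(\varphi,f_j)$ at $\psi(k)\leq\psi(k')$ and by $(\psi,g_k)$ at $k\leq k'$. Your index bookkeeping is in fact slightly more careful than the paper's text, which writes $h_k=g_k f_{\chi(k)}$ where the subscript should read $\psi(k)$ (as the paper's own pasted diagram makes clear), so nothing further is needed.
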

\begin{proof}
Let $ (\varphi, f_j): \mathcal{X}\rightarrow\mathcal{Y} $, $ (\psi, g_k):\mathcal{Y}\rightarrow\mathcal{Z} $, then $(\chi, h_k)=(\psi, g_k)(\varphi, f_j):\mathcal{X}\rightarrow\mathcal{Z} $ is $ \chi = \varphi\psi $, $ h_{k}=g_{k}f_{\chi(k)} $. $ \chi $ is the composition of two (semilattice) homomorphisms, hence it is a semilattice homomorphism. The claim follows from the commutativity of the following diagram (we omitted the indexes for the maps $ p,q, r, f,g $ to make the notation less cumbersome)
\begin{center}
\begin{tikzpicture}
\draw (-4,-3) node {$ Z_{k} $};
	\draw (4,-3) node {$ Z_{k'} $};
\draw [line width=0.8pt, <-] (-3.5,-3) -- (3.5,-3);
	\draw (0,-3.4) node {\begin{footnotesize}$r$\end{footnotesize}};
	
	\draw [line width=0.8pt, <-] (-4,-2.6) -- (-4,-0.4);
	\draw (-4.6,-1.7) node {\begin{footnotesize}$g$\end{footnotesize}};
	\draw (4.5,-1.7) node {\begin{footnotesize}$g$\end{footnotesize}};
	\draw [line width=0.8pt, ->] (4,-0.4) -- (4,-2.6);

\draw (-4,0) node {$ Y_{\psi(k)} $};
	\draw (4,0) node {$ Y_{\psi(k')} $};
	
	\draw [line width=0.8pt, <-] (-3.4,0) -- (3.3,0);
	\draw (0,-0.4) node {\begin{footnotesize}$q$\end{footnotesize}};
	
	\draw [line width=0.8pt, ->] (3.3,3) -- (-3.3,3);
	\draw (0, 3.3) node {\begin{footnotesize}$p$\end{footnotesize}};

	\draw (-4,3) node {$X_{\chi(k)}$}; 
	
	\draw (4,3) node {$X_{\chi(k')}$};
		
	\draw [line width=0.8pt, ->] (-4,2.6) -- (-4,0.4);
	\draw (-4.6,1.7) node {\begin{footnotesize}$f$\end{footnotesize}};
	\draw (4.5,1.7) node {\begin{footnotesize}$f$\end{footnotesize}};
	\draw [line width=0.8pt, <-] (4,0.4) -- (4,2.6);
	
\end{tikzpicture}
\end{center}
\end{proof}

\begin{proposition}\label{prop: categoria Sem inv}
Let $\mathfrak{C} $ an arbitrary category. Then \emph{Sem-inv-}$\mathfrak{C}$ is the category whose objects are semilattice inverse systems in $ \mathfrak{C} $ with morphisms as defined above. 
\end{proposition}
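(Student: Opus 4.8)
The plan is to verify that the data just assembled actually constitute a category: namely, that \emph{Sem-inv-}$\mathfrak{C}$ has a well-defined composition on morphisms, that this composition is associative, and that every object carries a two-sided identity morphism. The first of these requirements is precisely the content of Lemma \ref{lem: composizione di morfismi}, so no further argument is needed there; the work lies in producing identities and checking associativity.

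For the identity, I would propose, for each object $\mathcal{X} = \pair{X_i, p_{ii'}, I}$, the pair $1_{\mathcal{X}} = (\mathrm{id}_I, (\mathrm{id}_{X_i})_{i\in I})$, where $\mathrm{id}_I$ is the identity endomorphism of the semilattice $I$ and each $\mathrm{id}_{X_i}$ is the identity morphism of $X_i$ in $\mathfrak{C}$. First I would confirm that this pair really is a morphism in the sense of Definition \ref{def: morfismi di Sem inv}: $\mathrm{id}_I$ is trivially a semilattice homomorphism, each $\mathrm{id}_{X_i}\colon X_{\mathrm{id}_I(i)}\to X_i$ has the correct domain and codomain, and for $i\leq i'$ the defining square of Fig.\ 3 collapses to the tautology $p_{ii'}\circ\mathrm{id}_{X_{i'}} = \mathrm{id}_{X_i}\circ p_{ii'}$. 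Then, unwinding the composition formula recalled before Lemma \ref{lem: composizione di morfismi}, I would check that $1_{\mathcal{X}}$ acts neutrally on either side, which reduces immediately to the identity laws for semilattice homomorphisms and for identity morphisms in $\mathfrak{C}$.

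For associativity, given three composable morphisms $(\varphi, f_j)$, $(\psi, g_k)$ and $(\rho, h_l)$, I would compute both bracketings of the triple composite and compare them coordinatewise. On the first coordinate the claim is simply the associativity of the composite of the semilattice homomorphisms $\varphi, \psi, \rho$, which holds because these are ordinary functions; on the second coordinate each component is a threefold composite of morphisms of $\mathfrak{C}$, schematically of the form $h_l\circ g_{\rho(l)}\circ f_{\psi(\rho(l))}$, so the equality of the two bracketings is exactly the associativity axiom of $\mathfrak{C}$.

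I do not expect any conceptual difficulty here: once the identity is shown to be a morphism, both laws are formal consequences of the corresponding laws in $\mathfrak{C}$ and in the category of semilattices. The only point requiring genuine care is the index bookkeeping, since the reindexing maps $\varphi,\psi,\rho$ run \emph{opposite} to the system morphisms (so $\varphi\colon J\to I$, and so on) while the component morphisms $f_j$ are indexed over the codomain systems with domains $X_{\varphi(j)}$ living over the reindexed index set. Getting this contravariant matching of indices right is, I anticipate, the main thing to keep track of, after which the verification of both axioms is routine.
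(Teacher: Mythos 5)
Your proposal is correct and matches the paper's argument exactly: the paper likewise invokes Lemma \ref{lem: composizione di morfismi} for well-definedness of composition, takes $(1_I, 1_i)$ as the identity morphism, and asserts associativity, merely omitting the routine verifications (identity is a morphism, two-sided neutrality, coordinatewise associativity) that you spell out. Your careful index bookkeeping, e.g.\ the triple composite $h_l\circ g_{\rho(l)}\circ f_{\psi(\rho(l))}$, is accurate and in fact fixes a small indexing slip in the paper's statement of the composition ($h_k = g_k f_{\psi(k)}$, not $g_k f_{\chi(k)}$).
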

\begin{proof}
The composition of morphisms between systems is associative and the identity morphism is $ (1_{I},1_{i}) $, where $ 1_{I}: I\rightarrow I $ is the identity homomorphism on $ I $ and $ 1_i: X_{i}\rightarrow X_i $ is the identity morphism in the category $ \mathfrak{C} $. 
\end{proof}

The category of \emph{semilattice direct systems} of a given category $ \mathfrak{C} $ is obtained by reversing morphisms of Sem-inv-$\mathfrak{C}$ as follows:
\begin{definition}\label{def: direct system}
 Let $ \mathfrak{C} $ be an arbitrary category. A \emph{semilattice direct system} in $ \mathfrak{C} $ is a triple $\mathbb{X}= \pair{X_{i}, p_{ii'}, I} $ such that 
 \begin{itemize}
\item[(i)] $ I $ is a join semilattice with least element.
\item[(ii)] $ X_i $ is an object in $ \mathfrak{C}$, for each $ i\in I $;
\item[(iii)] $p_{ii'} : X_{i} \to X_{i'}$ is a morphism of $ \mathfrak{C} $, for each pair $i\leqs i'$, satisfying that $p_{ii} $ is the identity in $ X_i $ and such that $ i\leq i'\leq i'' $ implies $ p_{i'i''} \circ p_{ii'}  = p_{ii''}$. 
\end{itemize}
\end{definition}

We call $ I $, $X_i$, the index set and the terms of the direct system, respectively, while we refer to $ p_{ii'} $ as \emph{transition morphisms} to stress the crucial difference with respect to inverse systems.  

A morphism between two semilattice direct systems $ \mathbb{X} $ and $ \mathbb{Y} $ is a pair $(\varphi, f_i):\mathbb{X}\to\mathbb{Y}$ s. t.
\begin{itemize}
\item[i)] $ \varphi: I\rightarrow J $ is a semilattice homomorphism 
\item[ii)] $ f_i: X_{i}\rightarrow Y_{\varphi(i)} $ is a morphism of $ \mathfrak{C} $, making the following diagram commutative for each $ i, i'\in I $, $ i\leq i' $:
\end{itemize}
\begin{center}
\begin{figure}[h]\label{fig: diagramma semilattice dir systems}
\begin{tikzpicture}
\draw (-4,0) node {$ Y_{\varphi(i)} $};
	\draw (4,0) node {$ Y_{\varphi(i')} $};
	
	\draw [line width=0.8pt, ->] (-3.5,0) -- (3.4,0);
	\draw (0,-0.4) node {\begin{footnotesize}$q_{\varphi(i)\varphi(i')}$\end{footnotesize}};
	
	\draw [line width=0.8pt, <-] (3.3,3) -- (-3.3,3);
	\draw (0, 3.3) node {\begin{footnotesize}$p_{ii'}$\end{footnotesize}};

	\draw (-4,3) node {$X_i$}; 
	
	\draw (4,3) node {$X_{i'}$};
		
	\draw [line width=0.8pt, ->] (-4,2.6) -- (-4,0.4);
	\draw (-4.6,1.7) node {\begin{footnotesize}$f_i$\end{footnotesize}};
	\draw (4.6,1.7) node {\begin{footnotesize}$f_{i'}$\end{footnotesize}};
	\draw [line width=0.8pt, <-] (4,0.4) -- (4,2.6);
	
\end{tikzpicture}
\caption{The commuting diagram defining morphisms of semilattice direct systems.}
\end{figure}
\end{center}
The composition of two morphisms is defined as $(f_{i},\varphi)(g_{j},\psi)= (h_{i},\chi)$, 
$$ \chi = \psi\varphi , \;\;\;\;\;\;\;\;\;\;\; h_{i}=g_{\varphi(i)}f_{i}:X_{i}\rightarrow Z_{\chi(i)}. $$
It is easily verified that the composition $(h_{i},\chi)$ is a morphism and it is associative and that the element $ (1_{I},1_{i}) $, where  $ 1_{I}: I\rightarrow I $ is the identity map on $ I $ and $ 1_i: X_{i}\rightarrow X_i $ is the identity morphism in $ \mathfrak{C} $, is the identity morphism between semilattice direct systems. Therefore semilattice direct systems form a category which we will call Sem-dir-$\mathcal{C}$.

In the remaining part of this section we aim to show that the categories of semilattice direct and semilattice inverse systems of dually equivalent categories are dually equivalent. 
In order to do that, given a controvariant functor $\mathcal{F}\colon\mathfrak{C}\to\mathfrak{D}$ between two categories $\mathfrak{C}$ and $\mathfrak{D}$, we define a new functor $\mathcal{\widetilde{F}}\colon \text{ Sem-dir-}\mathfrak{C}\to\text{ Sem-inv-}\mathfrak{D}$ as follows 

\begin{equation}\label{eq: deinifione F tilde, G tilde}
\mathcal{\widetilde{F}}(\mathbb{X}):= \pair{\mathcal{F}(X_i), \mathcal{F}(p_{ii'}), I} \;\;\;\;\;\;\;\;\;\;\mathcal{\widetilde{F}}(\varphi,f_i):= (\varphi, \mathcal{F}(f_{i})), 
\end{equation}
\vspace{3pt}

where $\mathbb{X}= \pair{X_i, p_{ii'}, I} $ is an object and $(\varphi,f_i)$ a morphism in the category Sem-dir-$\mathfrak{C}$.

Similarly, whenever $\mathcal{G}\colon\mathfrak{D}\to\mathfrak{C}$ is a controvariant functor, then we define $\mathcal{\widetilde{G}}\colon \text{ Sem-inv-}\mathfrak{D}\to\text{ Sem-dir-}\mathfrak{C}$ as in \eqref{eq: deinifione F tilde, G tilde}. The crucial point is proving that the new maps are indeed functors.

\begin{lemma}\label{lemma: funtori Sem-dir Sem-inv}
Let $\mathcal{F}:\mathfrak{C}\to\mathfrak{D}$ be a controvariant functor between two categories $\mathfrak{C}$ and $\mathfrak{D}$. Then: 
\begin{enumerate}
\item $\widetilde{\mathcal{F}}$ is a controvariant functor between Sem-dir-$\mathfrak{C} $ and  Sem-inv-$\mathfrak{D} $; 
\item $\widetilde{\mathcal{G}}$ is a controvariant functor between Sem-inv-$\mathfrak{C} $ and  Sem-dir-$\mathfrak{D} $.
\end{enumerate}
\end{lemma}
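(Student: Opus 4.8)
The plan is to verify, directly from the definitions, the four properties that make $\widetilde{\mathcal{F}}$ a controvariant functor: it carries objects of Sem-dir-$\mathfrak{C}$ to objects of Sem-inv-$\mathfrak{D}$, it carries morphisms to morphisms while reversing their direction, it preserves identities, and it reverses composition. The only ingredients required are the functoriality of $\mathcal{F}$—that is, $\mathcal{F}(1_X)=1_{\mathcal{F}(X)}$ and the controvariant law $\mathcal{F}(g\circ h)=\mathcal{F}(h)\circ\mathcal{F}(g)$—together with Definitions \ref{def: Sem inverse system}, \ref{def: morfismi di Sem inv}, \ref{def: direct system} and the composition rule of Lemma \ref{lem: composizione di morfismi}. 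Since $\widetilde{\mathcal{G}}$ is defined by the same recipe, I would prove (1) in full and obtain (2) by interchanging the roles of direct and inverse systems throughout.

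First I would check that $\widetilde{\mathcal{F}}(\mathbb{X})=\pair{\mathcal{F}(X_i),\mathcal{F}(p_{ii'}),I}$ satisfies Definition \ref{def: Sem inverse system}. The index set $I$ is untouched, hence still a join semilattice with a lower bound, and each $\mathcal{F}(X_i)$ is an object of $\mathfrak{D}$. The decisive observation concerns the direction of the bonding maps: a transition morphism $p_{ii'}\colon X_i\to X_{i'}$ (with $i\leqs i'$) is sent by the controvariant $\mathcal{F}$ to $\mathcal{F}(p_{ii'})\colon\mathcal{F}(X_{i'})\to\mathcal{F}(X_i)$, which is exactly the profile of a bonding morphism of an inverse system. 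Preservation of identities gives $\mathcal{F}(p_{ii})=1_{\mathcal{F}(X_i)}$, and applying the controvariant law to the direct-system relation $p_{i'i''}\circ p_{ii'}=p_{ii''}$ yields $\mathcal{F}(p_{ii'})\circ\mathcal{F}(p_{i'i''})=\mathcal{F}(p_{ii''})$, which is precisely condition (iii) for an inverse system.

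Next I would show that $\widetilde{\mathcal{F}}$ sends a morphism $(\varphi,f_i)\colon\mathbb{X}\to\mathbb{Y}$ of direct systems to a morphism $(\varphi,\mathcal{F}(f_i))\colon\widetilde{\mathcal{F}}(\mathbb{Y})\to\widetilde{\mathcal{F}}(\mathbb{X})$ of inverse systems. The same semilattice homomorphism $\varphi\colon I\to J$ now plays the role demanded by Definition \ref{def: morfismi di Sem inv}, where the index map of an inverse-system morphism runs from the target index set to the source one; and by controvariance $\mathcal{F}(f_i)\colon\mathcal{F}(Y_{\varphi(i)})\to\mathcal{F}(X_i)$ has the correct source and target. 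The heart of the verification is the commuting square: starting from the direct-system identity $q_{\varphi(i)\varphi(i')}\circ f_i=f_{i'}\circ p_{ii'}$ and applying $\mathcal{F}$ controvariantly gives $\mathcal{F}(f_i)\circ\mathcal{F}(q_{\varphi(i)\varphi(i')})=\mathcal{F}(p_{ii'})\circ\mathcal{F}(f_{i'})$, which is exactly the square defining a morphism of inverse systems for the images under $\widetilde{\mathcal{F}}$.

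Finally I would dispatch functoriality. The identity $(1_I,1_i)$ is sent to $(1_I,\mathcal{F}(1_i))=(1_I,1_{\mathcal{F}(X_i)})$, the identity of $\widetilde{\mathcal{F}}(\mathbb{X})$. For composition I would take $(\varphi,f_i)\colon\mathbb{X}\to\mathbb{Y}$ and $(\psi,g_j)\colon\mathbb{Y}\to\mathbb{Z}$, whose composite in Sem-dir-$\mathfrak{C}$ is $(\psi\varphi,\, g_{\varphi(i)}\circ f_i)$; applying $\widetilde{\mathcal{F}}$ and using controvariance turns this into $(\psi\varphi,\, \mathcal{F}(f_i)\circ\mathcal{F}(g_{\varphi(i)}))$. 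Computing instead the Sem-inv-$\mathfrak{D}$ composite of $\widetilde{\mathcal{F}}(\psi,g_j)$ with $\widetilde{\mathcal{F}}(\varphi,f_i)$ by the rule of Lemma \ref{lem: composizione di morfismi} returns the very same pair, so $\widetilde{\mathcal{F}}$ reverses composition; part (2) follows by the dual argument. I do not expect any genuine difficulty: the computations are routine, and the single point requiring care—the only realistic source of error—is keeping the variance straight, i.e. checking at each step that the reversals induced by $\mathcal{F}$ match the opposite orientation conventions built into the definitions of inverse versus direct systems and of their morphisms.
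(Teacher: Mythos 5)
Your proof is correct and follows essentially the same route as the paper: transport objects and morphisms through $\mathcal{F}$, using controvariance to flip the bonding morphisms into the inverse-system orientation and to turn the direct-system commuting square into the inverse-system one. In fact you are slightly more thorough than the paper, which stops after checking that objects go to objects and morphisms to (direction-reversed) morphisms, whereas you also verify preservation of identities and reversal of composites explicitly.
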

\begin{proof}
Proof of (1) and (2) are essentially analogous, so we just give the details of (1): the reader can check that they can easily adapted to prove (2). \\
Assume that $\mathbb{X}=\pair{X_i, p_{ii'}, I}$ is an object in Sem-dir-$\mathfrak{C}$. We first show that $\widetilde{\mathcal{F}}(\mathbb{X})$ is an object in Sem-inv-$\mathfrak{D}$, namely it satisfies conditions (i), (ii), (iii) of Definition \ref{def: Sem inverse system}. Recall that, by \eqref{eq: deinifione F tilde, G tilde}, $ \mathcal{\widetilde{F}}(\mathbb{X}):= \pair{\mathcal{F}(X_i), \mathcal{F}(p_{ii'}), I} $. 
\begin{itemize}
\item[(i)] is clearly satisfied as $I$ is a semilattice with lower bound; 
\item[(ii)] Since $X_i$ is an object in $\mathfrak{C}$ and $\mathcal{F}$ a functor, $\mathcal{F}(X_i)$ is an object of $\mathfrak{D}$;
\item[(iii)] Let $i\leq i'$. Then there exists a morphism $p_{ii'} : X_{i} \to X_{i'}$ of $ \mathfrak{C} $ such that  $p_{ii} $ is the identity in $ X_i $ and moreover, if $ i\leq i'\leq i'' $ then $ p_{i'i''} \circ p_{ii'}  = p_{ii''}$. Since $\mathcal{F}$ is a controvariant functor, $\mathcal{F}(p_{ii'}): \mathcal{F}(X_{i'})\to\mathcal{F}(X_i)$ is a morphism of $\mathfrak{D}$. Moreover, $\mathcal{F}(p_{ii}) =\mathcal{F}(1_{\mathfrak{C}}) = 1_{\mathfrak{D}} $ and compositions are obviously preserved. 

\end{itemize}
This shows that $\widetilde{\mathcal{F}}(\mathbb{X})$ is an object. Now, suppose that $(\varphi, f_i) $ is a morphism in Sem-dir-$\mathfrak{C}$ between two objects $\mathbb{X}=\pair{ X_i, p_{ii'}, I}$, $\mathbb{Y}=\pair{ Y_j, p_{jj'}, J}$. We show that $\mathcal{\widetilde{F}}(\varphi,f_i):= (\varphi, \mathcal{F}(f_{i})) $ is a morphism from $\mathcal{\widetilde{F}}(\mathbb{Y})=\pair{\mathcal{F}(Y_j),\mathcal{F}(q_{jj'}), J} $ to $\mathcal{\widetilde{F}}(\mathbb{X})= \pair{\mathcal{F}(X_i),\mathcal{F}(p_{ii'}), I}$ (which also assures that $\widetilde{\mathcal{F}}$ is controvariant). We check that $\mathcal{\widetilde{F}}(\varphi,f_i)$ satisfies the properties i) and ii) in Definition \ref{def: morfismi di Sem inv}.
\begin{itemize}
\item[i)] clearly holds as $\varphi:I\to J$ is a semilattice homorphism from the index set of $\mathcal{\widetilde{F}}(\mathbb{X})$ to the index set of $\mathcal{\widetilde{F}}(\mathbb{Y})$; 
\item[ii)] For every $i\in I$, $f_i: X_{i}\to Y_{\varphi(i)}$ is a morphism in $\mathfrak{C}$ making the Diagram in Fig.1 commutative. Therefore, $\mathcal{F}(f_i): \mathcal{F}(Y_{\varphi(i)})\to\mathcal{F}(X_i)$ is a morphism in $\mathfrak{D}$. Suppose that $i\leq i'$, for some $i,i'\in I$, then $p_{ii'}: X_i\to X_{i'} $ and $\mathcal{F}(p_{ii'}):\mathcal{F}(X_{i'})\to\mathcal{F}(X_{i})$ is the correspondent morphism in $\mathfrak{D}$. Since $\varphi$ is a semilattice homomorphism, we have $\varphi(i)\leq\varphi(i')$, and $\mathcal{\widetilde{F}}(\mathbb{X})$ is a semilattice inverse system, then the following diagram commutes:
\begin{center}
\begin{tikzpicture}
\draw (-4,0) node {$ \mathcal{F}(X_{i}) $};
	\draw (4,0) node {$ \mathcal{F}(X_{i'}) $};
	
	\draw [line width=0.8pt, <-] (-3.1,0) -- (3.1,0);
	\draw (0,-0.4) node {\begin{footnotesize}$\mathcal{F}(p_{ii'})$\end{footnotesize}};
	
	\draw [line width=0.8pt, ->] (3,3) -- (-3,3);
	\draw (0, 3.3) node {\begin{footnotesize}$\mathcal{F}(q_{\varphi(i)\varphi(i')})$\end{footnotesize}};

	\draw (-4,3) node {$\mathcal{F}(Y_{\varphi(i)})$}; 
	
	\draw (4,3) node {$\mathcal{F}(Y_{\varphi(i')})$};
		
	\draw [line width=0.8pt, ->] (-4,2.6) -- (-4,0.4);
	\draw (-4.6,1.7) node {\begin{footnotesize}$\mathcal{F}(f_i)$\end{footnotesize}};
	\draw (4.6,1.7) node {\begin{footnotesize}$\mathcal{F}(f_{i'})$\end{footnotesize}};
	\draw [line width=0.8pt, <-] (4,0.4) -- (4,2.6);
	
\end{tikzpicture}
\end{center}
\end{itemize}
This concludes our claim.
\end{proof}

Notice that the statement of Lemma \ref{lemma: funtori Sem-dir Sem-inv} is false when considering \emph{covariant} functors instead of controvariant as shown by the following example.

\begin{example}
Let $\mathfrak{C}$ be an algebraic category, $\mathfrak{Set}$ the category of sets and $\mathcal{F}\colon\mathfrak{C}\to\mathfrak{Set}$ the forgetful functor. For any object $\mathbb{X}=\pair{X_{i}, p_{ii'}, I}$ in Sem-dir-$\mathfrak{C}$, $\mathcal{\widetilde{F}}(\mathbb{X})$ is not an object in Sem-inv-$\mathfrak{Set}$. Indeed, for any two indexes such that $i\leq i'$, we have a morphism in $\mathfrak{C}$, $p_{ii'}\colon X_{i}\to X_{i'}$; since $\mathcal{F}$ is covariant, $\mathcal{\widetilde{F}}(p_{ii'})=\mathcal{F}(p_{ii'})$ is a function (a morphism in $\mathfrak{Set}$) from $\mathcal{F}(X_i)$ to $\mathcal{F}(X_{i'})$, hence it does not fulfill condition (iii) in Definition \ref{def: Sem inverse system}.
\end{example}

\begin{theorem}\label{teorema: duality Sem-dir Sem-inv}
Let $\mathfrak{C}$ and $\mathfrak{D}$ be dually equivalent categories. Then \emph{Sem-dir-}$\mathfrak{C}$ and \emph{Sem-inv-}$\mathfrak{D}$ are dually equivalent.
\end{theorem}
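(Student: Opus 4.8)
The plan is to promote the contravariant functors $\mathcal{F}$ and $\mathcal{G}$ witnessing the duality between $\mathfrak{C}$ and $\mathfrak{D}$ to their system-level counterparts $\widetilde{\mathcal{F}}$ and $\widetilde{\mathcal{G}}$, and then to check that the latter still witness a duality, now between Sem-dir-$\mathfrak{C}$ and Sem-inv-$\mathfrak{D}$. Since $\mathfrak{C}$ and $\mathfrak{D}$ are dually equivalent, by Definition \ref{def: equivalenza di categorie} there are contravariant functors $\mathcal{F}\colon\mathfrak{C}\to\mathfrak{D}$ and $\mathcal{G}\colon\mathfrak{D}\to\mathfrak{C}$ with $\mathcal{G}\circ\mathcal{F}=\mathrm{id}_{\mathfrak{C}}$ and $\mathcal{F}\circ\mathcal{G}=\mathrm{id}_{\mathfrak{D}}$. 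First I would invoke Lemma \ref{lemma: funtori Sem-dir Sem-inv} to obtain that $\widetilde{\mathcal{F}}\colon\text{Sem-dir-}\mathfrak{C}\to\text{Sem-inv-}\mathfrak{D}$ and $\widetilde{\mathcal{G}}\colon\text{Sem-inv-}\mathfrak{D}\to\text{Sem-dir-}\mathfrak{C}$ are both contravariant functors; this is exactly the content established there, so no further work on functoriality is required.

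It then remains to show $\widetilde{\mathcal{G}}\circ\widetilde{\mathcal{F}}=\mathrm{id}_{\text{Sem-dir-}\mathfrak{C}}$ and $\widetilde{\mathcal{F}}\circ\widetilde{\mathcal{G}}=\mathrm{id}_{\text{Sem-inv-}\mathfrak{D}}$; by symmetry I would spell out only the first. The key observation is that, by definition \eqref{eq: deinifione F tilde, G tilde}, both tilde-functors act componentwise on terms and bonding/transition morphisms while leaving the index set and the semilattice homomorphism $\varphi$ untouched. Concretely, for an object $\mathbb{X}=\pair{X_i,p_{ii'},I}$ of Sem-dir-$\mathfrak{C}$ one computes
\begin{equation*}
\widetilde{\mathcal{G}}\bigl(\widetilde{\mathcal{F}}(\mathbb{X})\bigr)=\widetilde{\mathcal{G}}\bigl(\pair{\mathcal{F}(X_i),\mathcal{F}(p_{ii'}),I}\bigr)=\pair{\mathcal{G}(\mathcal{F}(X_i)),\mathcal{G}(\mathcal{F}(p_{ii'})),I},
\end{equation*}
and since $\mathcal{G}\circ\mathcal{F}=\mathrm{id}_{\mathfrak{C}}$ this equals $\pair{X_i,p_{ii'},I}=\mathbb{X}$. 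Likewise, for a morphism $(\varphi,f_i)$ I would compute $\widetilde{\mathcal{G}}(\widetilde{\mathcal{F}}(\varphi,f_i))=\widetilde{\mathcal{G}}(\varphi,\mathcal{F}(f_i))=(\varphi,\mathcal{G}(\mathcal{F}(f_i)))=(\varphi,f_i)$, again using $\mathcal{G}\circ\mathcal{F}=\mathrm{id}_{\mathfrak{C}}$ together with the fact that the first coordinate $\varphi$ is carried along unchanged. The symmetric computation, with the roles of $\mathcal{F},\mathcal{G}$ and of direct/inverse systems swapped, gives $\widetilde{\mathcal{F}}\circ\widetilde{\mathcal{G}}=\mathrm{id}_{\text{Sem-inv-}\mathfrak{D}}$; and since $\widetilde{\mathcal{F}}$ and $\widetilde{\mathcal{G}}$ are both contravariant, this exhibits Sem-dir-$\mathfrak{C}$ and Sem-inv-$\mathfrak{D}$ as dual to each other in the sense fixed after Definition \ref{def: equivalenza di categorie}.

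The heavy lifting has in fact already been carried out in Lemma \ref{lemma: funtori Sem-dir Sem-inv}: the genuinely delicate point there is that a contravariant $\mathcal{F}$ sends the transition morphisms $p_{ii'}\colon X_i\to X_{i'}$ of a direct system to bonding morphisms $\mathcal{F}(p_{ii'})\colon\mathcal{F}(X_{i'})\to\mathcal{F}(X_i)$ of an inverse system, reversing the arrows while preserving the index semilattice, which is precisely why the construction exchanges direct and inverse systems and why covariant functors would fail (cf.\ the Example following that lemma). Granting functoriality, the present theorem is essentially a bookkeeping verification, and the only thing I expect to watch carefully is that the index component is never altered by the tilde-construction, so that the strict identities $\mathcal{G}\mathcal{F}=\mathrm{id}_{\mathfrak{C}}$ and $\mathcal{F}\mathcal{G}=\mathrm{id}_{\mathfrak{D}}$ lift verbatim to the system level; beyond keeping the variances and the direct/inverse dichotomy straight I anticipate no real obstacle.
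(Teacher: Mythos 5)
Your proposal is correct and follows essentially the same route as the paper: invoke Lemma \ref{lemma: funtori Sem-dir Sem-inv} for contravariant functoriality of $\widetilde{\mathcal{F}}$ and $\widetilde{\mathcal{G}}$, then verify componentwise that $\widetilde{\mathcal{G}}\circ\widetilde{\mathcal{F}}$ and $\widetilde{\mathcal{F}}\circ\widetilde{\mathcal{G}}$ are identities using $\mathcal{G}\circ\mathcal{F}=\mathrm{id}_{\mathfrak{C}}$ and $\mathcal{F}\circ\mathcal{G}=\mathrm{id}_{\mathfrak{D}}$. If anything, you are slightly more thorough than the paper, which spells out the identity only on objects and leaves the morphism-level check $(\varphi,\mathcal{G}(\mathcal{F}(f_i)))=(\varphi,f_i)$ implicit.
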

\begin{proof}
By hypothesis we have two controvariant functors $\mathcal{F}$ and $\mathcal{G}$ 

\[
\begin{tikzcd}[row sep = tiny, arrows = {dash}]
& & \mathcal{F} &  & & \\
  & \mathfrak{C}\arrow[rr, bend left = 25, rightarrow] &  & \mathfrak{D}\arrow[ll, bend left = 25, rightarrow] &                                  &  \\
& & \mathcal{G} & & &   
  \end{tikzcd}
  \]
such that such that $ \mathcal{G}\circ\mathcal{F} = id_{\mathfrak{C}} $ and $ \mathcal{F}\circ\mathcal{G} = id_{\mathfrak{D}} $. By Lemma \ref{lemma: funtori Sem-dir Sem-inv} we have controvariant functors $\mathcal{\widetilde{F}}$ and $\mathcal{\widetilde{G}}$ 
\[
\begin{tikzcd}[row sep = tiny, arrows = {dash}]
& & \mathcal{\widetilde{F}} &  & & \\
& & & & & \\
& & & & & \\
  & \text{Sem-dir-}\mathfrak{C}\arrow[rr, bend left = 25, rightarrow] &  & \text{Sem-inv-}\mathfrak{D}\arrow[ll, bend left = 25, rightarrow] &                                  &  \\
  & & & & & \\
& & & & & \\
& & \mathcal{\widetilde{G}} & & &   
  \end{tikzcd}
  \] 
  
We only need to check that the compositions $ \mathcal{\widetilde{G}}\circ\mathcal{\widetilde{F}} $ and $ \mathcal{\widetilde{F}}\circ\mathcal{\widetilde{G}} $ are the identities in the categories Sem-dir-$\mathfrak{C}$ and Sem-inv-$\mathfrak{D}$, respectively. Let $\mathbb{X}=\pair{X_i, p_{ii'}, I}$ be an object in Sem-dir-$\mathfrak{C}$. Then 
\[
\mathcal{\widetilde{G}}(\mathcal{\widetilde{F}}(\mathbb{X}))= \mathcal{\widetilde{G}}(\pair{\mathcal{F}(X_i), \mathcal{F}(p_{ii'}), I}) =\pair{\mathcal{G}\circ\mathcal{F} (X_i), \mathcal{G}\circ\mathcal{F}(p_{ii'}), I } = \pair{X_i, p_{ii'}, I},
\]
where the last equality is obtained by $ \mathcal{G}\circ\mathcal{F} = id_{\mathfrak{C}} $. It is analogous to verify that $ \mathcal{\widetilde{F}}\circ\mathcal{\widetilde{G}} $ is the identity. 
\end{proof}

The above result somehow resembles \emph{semilattice-based dualities} establish by Romanowska and Smith in \cite{Romanowska96,Romanowska97}, where the authors essentially show how to lift a duality between two categories, in particular an algebraic category and its dual representation spaces, to a duality involving the correspondent semilattice representations. The duality in Theorem \ref{teorema: duality Sem-dir Sem-inv} is also ``based'' on certain semilattice systems. However, the two approaches are characterized by a substantial difference: Romanowska and Smith indeed consider, from one side, the semilattice sum of an algebraic category but, on the other, the semilattice representation of the dual spaces, and thus the duality, is obtained by \emph{dualizing} the semilattice of the index sets. In order to achieve this, they rely on the duality due to Hofmann, Mislove and Stralka \cite{hofmann1974pontryagin} for semilattices (see also \cite{Davey1993} for details). This means, that the semilatttice representation of the dual spaces (of the considered categories) is constructed via compact topological semilattices with $0$ which carries the Boolean topology (namely makes the space compact, Hausdorff and totally disconnected).

\section{The category of Involutive Bisemilattices and its dual}\label{sec: dualita}

P\l onka introduced \cite{Plo67,Plo68, Plo84} a construction to build algebras out of semilattice systems of algebras\footnote{It is essential to have a semilattice with lower bound (instead of a pointless semilattice) when working with algebras having constants, see \cite{Plo84}.}, see also \cite{RomanPlonka,romanowska2002modes}

\begin{definition}\label{def: somma di Plonka}
Let $ \mathbb{A}=\pair{\A_i , \varphi_{ii'}, I} $ be a semilattice direct system of algebras $\A_i=\pair{A_i , f_{t}}$ of a fixed type $\nu$, then the \emph{P\l onka sum} over $\mathbb{A}$ is the algebra $\PLA = \pair{\bigsqcup_I A_i, g^{\mathcal{P}}_{_t}} $, whose universe is the disjoint union and the operations $ g^{\mathcal{P}}_{_t} $ are defined as follows: for every $n$"-ary $g_{_t}\in \nu$, and $a_1,\dots, a_n\in \bigsqcup_I A_i $, where $n\geq 1$ and $a_r\in A_{i_r}$, we set $j = i_1 \lor\dots\lor i_n$ and define
\[
g^{\mathcal{P}}_{_t} (a_1,\dots,a_n) = g^{\A_j}_{_t} (\varphi_{i_1j}(a_1),\dots,\varphi_{i_nj}(a_n)).
\]
In case $\nu$ contains constants, then, for every constant $g\in\nu$, we define $g^{\mathcal{P}} = g^{\A_{i_{0}}}$.
\end{definition}

Involutive bisemilattices, as well as bisemilattices admits a representation as P\l onka sums over a semilattice sistem of Boolean algebras (this was already proved in \cite{Plonka69, Plo84}). 
\begin{theorem}[\mbox{\cite[Thm.~46]{Bonzio16}}]\label{the: rappresentazione Plonka}\

\begin{itemize}
\item[1)] If $\mathbb{A} $ 
is a semilattice direct system of Boolean algebras, then the $ \PLA $ is an involutive bisemilattice.

\item[2)] If $\B$ is an involutive bisemilattice, then $\B$ is isomorphic to the P\l onka sum over a semilattice direct system of Boolean algebras\footnote{The form of the semilattice direct system used in the P\l onka sum representation is not needed for the purposes of this work. For more details, the reader could refer to \cite{Bonzio16} or \cite{Pl'onka84}.}. 
\end{itemize}
\end{theorem}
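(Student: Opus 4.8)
The plan is to prove the two implications separately, and in both to exploit that the defining axioms (I1)--(I8) of Definition~\ref{def: IBSL} are \emph{regular}, i.e. the same variables occur on both sides of each identity. Regularity is exactly the feature preserved by P\l onka sums, so Part~1 will follow from a general preservation principle, whereas Part~2 amounts to reconstructing, intrinsically from $\B$, the fibred decomposition underlying any P\l onka sum.

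For Part~1, let $\mathbb{A}=\pair{\A_i,\varphi_{ii'},I}$ be a semilattice direct system of Boolean algebras. First I would check that every axiom is regular: (I1)--(I4) and (I7) share their variable set side by side, (I5) and (I6) both have variable set $\{x,y\}$, and (I8) is variable-free. Each of them holds in every Boolean algebra once $+,\cdot,{}',0,1$ are read as join, meet, complement and the two bounds (for instance (I6) is $x\wedge(x^{c}\vee y)=x\wedge y$). I would then invoke P\l onka's result \cite{Plo84} that a P\l onka sum over a semilattice direct system satisfies every regular identity valid in all of its summands; applied to (I1)--(I8) this yields $\PLA\in\IBSL$. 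Should a self-contained argument be preferred, the same conclusion reads off directly from Definition~\ref{def: somma di Plonka}: evaluating both sides of a regular identity in $\PLA$ carries all arguments, via the bonding maps, into the single fibre indexed by the join of their indices, where the identity holds because that fibre is a Boolean algebra.

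For Part~2, let $\B\in\IBSL$. The key is to recover the index semilattice and the fibres equationally. I would set $e(a):=a+a'$, take $I:=\{e(a):a\in B\}$ with operation $+$, and show (this is the technical core, see below) that $e$ is a surjective semilattice homomorphism from $\langle B,+\rangle$ onto $\langle I,+\rangle$; since $0$ is the $+$-least element by (I7) and $e(0)=0+0'=1$, the semilattice $\langle I,+\rangle$ has least element $i_{0}=1$. For $i\in I$ put $A_{i}:=\{a\in B:a+a'=i\}$; from $a'+(a')'=a+a'$ the involution restricts to each $A_{i}$, and I would verify that $\langle A_{i},+,\cdot,{}',i',i\rangle$ is a Boolean algebra with top $i$, bottom $i'$ and complementation ${}'$, the Boolean laws following from (I4)--(I6) together with the distributivity noted after Definition~\ref{def: IBSL}. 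The transition maps $\varphi_{ij}\colon A_{i}\to A_{j}$, $\varphi_{ij}(a):=a+j$ for $i\leq j$, are then Boolean homomorphisms with $\varphi_{ii}=\mathrm{id}$ and $\varphi_{jk}\circ\varphi_{ij}=\varphi_{ik}$, so $\mathbb{A}=\pair{A_{i},\varphi_{ij},I}$ is a semilattice direct system in the sense of Definition~\ref{def: direct system}. Finally I would show that the identity on $B$ is an isomorphism $\B\to\PLA$: for $a\in A_{i}$, $b\in A_{j}$ one checks $a+b=\varphi_{i,i\vee j}(a)+\varphi_{j,i\vee j}(b)$ and the analogous equation for $\cdot$, while ${}'$ and the constants are handled fibrewise (the constants lying in $A_{i_{0}}$).

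The routine parts (functoriality of the $\varphi_{ij}$ and the P\l onka-sum operation formulas) reduce to direct equational manipulation with (I1)--(I8), so I expect the genuine obstacle to be the two structural claims in Part~2: that each fibre $A_{i}$ is a \emph{Boolean} algebra, and that $+$ maps $A_{i}\times A_{j}$ into $A_{i\vee j}$, equivalently that $e(a+b)=e(a)+e(b)$ so that $e$ is a semilattice homomorphism. Establishing that $A_{i}$ is a bounded distributive lattice with top $i$ and bottom $i'$ in which ${}'$ is a complement—hence Boolean—draws on the involution axioms (I4)--(I5), while the absorption (I6), i.e. the regularized rule~\ref{rmp}, is what forces the fibrewise complementation $a\cdot a'=i'$ to pin down the bottom and thereby upgrades each fibre beyond a mere bisemilattice. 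Keeping track of this grading so that the operations of $\B$ coincide with those prescribed by Definition~\ref{def: somma di Plonka} is the bookkeeping heart of the argument.
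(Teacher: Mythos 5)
Your overall strategy is sound and, in fact, mirrors the proof the paper itself does not reproduce: Theorem \ref{the: rappresentazione Plonka} is imported verbatim from \cite[Thm.~46]{Bonzio16}, and the argument there (going back to P\l onka \cite{Plonka69,Plo84}) is exactly your two-pronged plan: Part 1 by preservation of regular identities under P\l onka sums, Part 2 by reconstructing the semilattice replica via $e(a)=a+a'$, the fibres $A_i=\{a: a+a'=i\}$, and fibrewise Boolean structure with top $i$ and bottom $i'$. Your Part 1 is correct as stated, including the self-contained induction on terms (with constants absorbed by the least index $i_0$).

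There is, however, one concrete step in Part 2 that fails as written: the transition maps. You define $\varphi_{ij}(a):=a+j$ for $i\leq j$. But $j=e(b)$ is, by your own analysis, the \emph{top} of the fibre $A_j$, so joining with it absorbs: in any P\l onka sum of Boolean algebras one has $a+j=p_{ij}(a)\vee 1_{A_j}=1_{A_j}=j$, i.e.\ your $\varphi_{ij}$ is the constant map onto $j$. (Test it on the two-fibre sum of $\two$ into the four-element Boolean algebra: $0+j=j\neq p(0)$.) Such a map is not a Boolean homomorphism --- it does not preserve the bottom --- and your key compatibility equation $a+b=\varphi_{i,i\vee j}(a)+\varphi_{j,i\vee j}(b)$ would then yield the false conclusion $a+b=i\vee j$ for all $a,b$. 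The repair is immediate and standard: set $\varphi_{ij}(a):=a\cdot j$ (meet with the top of the target fibre), or equivalently $\varphi_{ij}(a):=a+j'$ (join with its bottom); with either choice $\varphi_{ii}=\mathrm{id}$, compositions behave, and the decomposition identities go through. Beyond this slip, note that what you flag as the ``technical core'' --- that $e(a+b)=e(a)+e(b)$, that each fibre is closed under the operations, and that the fibrewise lattice absorption $a+(a\cdot a')=a$ holds (this last follows from the noted identity $x+y\approx x+(x'\cdot y)$ with $y=x$) --- must be derived directly from (I1)--(I8); appealing at that point to ``regular identities valid in Boolean algebras hold in $\IBSL$'' would be circular, since that principle is essentially the theorem being proved. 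Your proposal correctly identifies these derivations but does not carry them out, so Part 2 remains a (well-aimed) sketch with one genuinely wrong formula.
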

The above result states that every involutive bisemilattice admits a unique representation as P\l onka sum of Boolean algebras. 
We summarize here the categories we are dealing with
\vspace{10pt}
\begin{center}
\begin{tabular}{|c|c|c|}
\hline
\textbf{Category} & \textbf{Objects} & \textbf{Morphisms} \\
\hline
$\mathfrak{BA}$ & Boolean Algebras & Homomorphisms of $\mathcal{BA}$ \\
\hline
$\cIBSL$ & Involutive bisemilattices & Homomorphisms of $\mathcal{IBSL}$ \\
\hline
Sem-dir-$\mathfrak{BA}$ & semilattice direct systems of B.A. & Morphisms of Sem-dir-$\mathfrak{BA}$ \\
\hline
$\mathfrak{SA}$ & Stone spaces & continuous maps \\
\hline
Sem-inv-$\mathfrak{SA}$ & semilattice inverse systems of Stone sp.  & Morphisms of Sem-inv-$\mathfrak{SA}$ \\
\hline
\end{tabular}
\end{center}
\vspace{15pt}
Theorem \ref{the: rappresentazione Plonka} states that the objects of the category $\mathfrak{IBSL}$ are isomorphic to the objects of the category Sem-dir-$\mathfrak{BA}$. We aim at proving more, namely that they are also equivalent as categories. 
In order to establish this, we prove the following auxiliary lemmata. 
\begin{lemma}\label{lem: omomorfismi tra ibsl}
Let $ \mathbb{A} =\pair{\A_{i}, p_{ii'}, I} $ and $ \mathbb{B}=\pair{\B_{j}, q_{jj'},J} $ be semilattice direct systems of Boolean algebras. Let $\A=\PLA$, $\B=\PLB$ and $ h: \A\to \B $ a homomorphism, then for any $ i\in I $ there exists a $j\in J $ such that
\begin{enumerate}
\item $ h(A_i)\subseteq B_{j} $
\item $ h_{|A_{i}} $ is a Boolean homomorphism from $ \A_i $ into $ \B_{j} $  
\end{enumerate}
\end{lemma}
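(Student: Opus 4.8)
The plan is to recover the Boolean fibre of a point from a single term in the language of involutive bisemilattices, and then to read off both assertions from the fact that $h$ preserves that term. The key preliminary observation is that on each fibre the P\l onka"-sum operations collapse to the operations of the Boolean algebra living there: for $a,b\in A_i$ one computes $a+b=a\lor_i b$, $a\cdot b=a\land_i b$ and $a'=\neg_i a$ inside $\A_i$, the unary case using $\varphi_{ii}=\mathrm{id}$. Consequently $a+a'=1_i$ for every $a\in A_i$, where $1_i$ is the top element of $\A_i$, because $x\lor\neg x=1$ holds in every Boolean algebra. From this I would extract an algebraic description of the fibre partition: two elements $c,d$ of a P\l onka sum of Boolean algebras lie in the same fibre if and only if $c+c'=d+d'$. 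Indeed $c+c'$ and $d+d'$ are the top elements of the respective fibres, and, since distinct fibres are disjoint, equality of these tops forces the fibres to coincide; the converse is immediate.

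With this in hand, statement (1) is almost formal. Fixing $i\in I$ and taking $a,b\in A_i$, I would use $a+a'=1_i=b+b'$ together with the fact that $h$ preserves $+$ and $'$ to get $h(a)+h(a)'=h(a+a')=h(b+b')=h(b)+h(b)'$; by the characterisation just described, $h(a)$ and $h(b)$ then lie in the same fibre of $\mathbb B$. Since \emph{being in the same fibre} is an equivalence relation and $A_i$ is non-empty, the whole image $h(A_i)$ falls into one fibre $B_j$, and $j$ is unique by disjointness of the fibres.

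For statement (2) I would verify that $h_{|A_i}\colon\A_i\to\B_j$ respects the Boolean structure. For $a,b\in A_i$ both $a+b$ and $h(a)+h(b)$ are genuine Boolean joins, computed in $\A_i$ and $\B_j$ respectively (here one uses (1) to know $h(a),h(b)\in B_j$); as $h(a+b)=h(a)+h(b)$, the restriction preserves join, and the same computation with $\cdot$ and $'$ handles meet and complement. For the Boolean constants I would again invoke $1_i=a+a'$ and $0_i=a\cdot a'$ for any chosen $a\in A_i$, obtaining $h(1_i)=h(a)+h(a)'=1_j$ and $h(0_i)=h(a)\cdot h(a)'=0_j$ in $\B_j$.

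The only genuinely load"-bearing step is the algebraic identification of the fibres by the term $x+x'$; once that is in place, the preservation of $+$ and $'$ by $h$ delivers (1) and (2) with no further effort, the remaining verifications being a routine translation between the P\l onka"-sum operations and the fibrewise Boolean ones. The mild subtlety to keep in mind is that $h$, being only an $\mathcal{IBSL}$"-homomorphism, is not assumed to preserve the fibrewise constants $0_i,1_i$ a priori; this is exactly what the term description of the tops (and its dual for the bottoms) is used to establish.
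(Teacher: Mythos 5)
Your proof is correct and takes essentially the same approach as the paper: where you identify the fibre of an element by the top term $x+x'$ and compare $h(a)+h(a)'$ with $h(b)+h(b)'$, the paper dually uses the bottom term, showing $h(0_{A_i})=h(a\cdot a')=h(a)\cdot h(a)'=0_{B_j}$ and deriving a contradiction with disjointness of the fibres if some other element of $A_i$ landed in $B_k$ with $k\neq j$. Both arguments rest on the same three load-bearing facts you isolate --- negation preserves fibres, $h$ preserves the fibre-identifying term, and distinct fibres are disjoint --- and your treatment of part (2), including the recovery of the constants via $a+a'$ and $a\cdot a'$, matches the paper's.
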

\begin{proof}
(1) As first notice that, from the construction of P\l onka sums, we have that for any $ x\in A_i $, also $ x'\in A_i $. Consequently, for any $ h(x)\in B_j $, for a certain $j\in J$, then also $ h(x)'\in B_j $. Let $ a\in A_i $ for some $ i\in I $, then there exists a $ j\in J $ such that $ h(a)\in B_j $. Therefore $ h(0_{A_i}) = h(a\wedge a') = h(a)\wedge h(a') = h(a)\wedge h(a)' = 0_{B_j}  $, where the last equality holds since $ h(a) $ and $ h(a)'$ belong to the same Boolean algebra $ B_j $. Similarly, $ h(1_{A_i}) = h(a\vee a') = h(a)\vee h(a') = h(a)\vee h(a)' = 1_{B_j}  $. 

We now have to prove that for any $ a\in A_i $, with $ a\neq 0_{A_i} $ we have that $ h(a)\in B_j $. 
Suppose, by contradiction, that $ a\in A_i $, and $ h(a)\in B_k$, with $ j\neq k $. Then $ 0_{B_j} = h(0_{A_i}) = h(a\wedge a') = h(a)\wedge h(a') = h(a)\wedge h(a)' = 0_{B_k} $, which is impossible, as, by construction $ B_{j}\cap B_k = \emptyset $, hence, necessarily $ h(A_i)\subseteq B_j $. 

(2) follows from the fact that $ h $ preserves joins, meets and complements by definition and we already proved that $ h(0_{A_i}) = 0_{B_j} $ and $ h(1_{A_i}) = 1_{B_j} $.
\end{proof}
Theorem \ref{the: rappresentazione Plonka} together with Lemma \ref{lem: omomorfismi tra ibsl} state that $\mathcal{IBSL}-$homomorphisms are nothing but homomorphisms between the correspondent (unique) P\l onka sum representations. 
The statement of Lemma \ref{lem: omomorfismi tra ibsl} can be exposed more precisely saying that there exists a map $ \varphi:I\rightarrow J $ such that for every homomorphism $ h: \PLA\rightarrow\PLB $, $ h(A_{i})\subseteq B_{\varphi(i)} $. It is not difficult to prove that such map is actually a semilattice homomorphism.  
\begin{lemma}\label{lem: omomorfismo degli indici}
Let $ \mathbb{A} =\pair{\A_{i}, p_{ii'}, I} $ and $ \mathbb{B}=\pair{\B_{j}, q_{jj'},J} $ be semilattice direct systems of Boolean algebras. Let $\A=\PLA$, $\B=\PLB$, $ h: \A\to \B $ a homomorphism and $ \phih:I\rightarrow J $ such that $ h(A_{i})\subseteq B_{\varphi(i)} $. Then $ \phih $ is a semilattice homomorphism.
\end{lemma}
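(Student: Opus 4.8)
The plan is to check the two things a homomorphism of join semilattices with least element must do: preserve the join $\vee$ and preserve the bottom. The single structural fact I will lean on throughout is that, in a P\l onka sum, the result of any binary operation applied to an element of the component $i$ and an element of the component $i'$ lands in the component indexed by $i\vee i'$ (Definition~\ref{def: somma di Plonka}), together with the fact that distinct components of the P\l onka sum $\B=\PLB$ are pairwise disjoint. Recall first that $\phih$ is well defined: by Lemma~\ref{lem: omomorfismi tra ibsl}, for each $i\in I$ there is a $j\in J$ with $h(A_i)\subseteq B_{j}$, and disjointness of the $B_j$ makes this $j$ unique, so we may set $\phih(i)=j$.

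For the join, I would fix $i,i'\in I$ and choose arbitrary elements $a\in A_i$ and $b\in A_{i'}$; this is possible since every Boolean algebra is nonempty. On one side, the P\l onka sum $a+b$ is computed in the term indexed by $i\vee i'$, so $a+b\in A_{i\vee i'}$, whence $h(a+b)\in B_{\phih(i\vee i')}$. On the other side, since $h$ is a homomorphism, $h(a+b)=h(a)+h(b)$ with $h(a)\in B_{\phih(i)}$ and $h(b)\in B_{\phih(i')}$; computing this sum inside $\B$ gives $h(a)+h(b)\in B_{\phih(i)\vee\phih(i')}$. Thus $h(a+b)$ belongs simultaneously to $B_{\phih(i\vee i')}$ and to $B_{\phih(i)\vee\phih(i')}$, and disjointness of the components forces $\phih(i\vee i')=\phih(i)\vee\phih(i')$.

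To preserve the least element, I would argue similarly: the constant $0$ of $\A$ lies in $A_{i_0}$, and as $h$ is an $\IBSL$-homomorphism it sends $0$ to the constant $0$ of $\B$, which lies in the bottom component $B_{j_0}$. Since also $h(A_{i_0})\subseteq B_{\phih(i_0)}$ and the components of $\B$ are disjoint, this element being shared forces $\phih(i_0)=j_0$. Combining this with the previous paragraph shows that $\phih$ is a homomorphism of join semilattices with least element.

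I do not anticipate a serious obstacle here: the entire argument reduces to the observation that the binary P\l onka operation transports a pair from components $i$ and $i'$ into the component $i\vee i'$. The only point requiring care is the appeal to disjointness of the components, which is exactly what converts the two separate membership conditions on $h(a+b)$ into an \emph{equality} of indices; everything else is a direct unwinding of the definitions.
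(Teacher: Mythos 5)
Your proof is correct and follows essentially the same route as the paper's: apply $h$ to a binary operation on elements of components $i$ and $i'$ (the paper uses $\wedge$ where you use $+$, an immaterial difference), compare with the componentwise computation in $\B$, and let pairwise disjointness of the components force $\phih(i\vee i')=\phih(i)\vee\phih(i')$. Your extra verification that $\phih$ sends the lower bound $i_0$ to the lower bound of $J$ (via $h(0)=0$ and disjointness) is a small supplement the paper leaves implicit, and it is exactly what justifies the later assertion in Lemma~\ref{lemma: morfismi tra somme di Plonka} that $\varphi(i_0)$ is the lower bound of $J$.
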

\begin{proof}
Let $ a_1\in A_i $ and $ a_{2}\in A_{i'} $, with $ i,i'\in I $; by definition of $ \PLA $, $ a_1\wedge a_2\in A_{i\vee i'} $ and $ h(a_1)\in B_{\phih (i)} $, $ h(a_2)\in B_{\phih (i')} $, then $ h(a_{1}\wedge a_{2}) = h(a_1)\wedge h(a_2)\in B_{\phih(i)\vee\phih(i')} $. But since $ h(a_{1}\wedge a_{2})\in B_{\phih(i\vee i')} $, then necessarily $ \phih(i\vee i')= \phih(i)\vee\phih(i') $, i.e. $ \phih $ is a semilattice homorphism.
\end{proof}

\begin{lemma}\label{lemma: morfismi tra somme di Plonka}
Let $ \mathbb{A} =\pair{\A_{i}, p_{ii'}, I} $ and $ \mathbb{B}=\pair{\B_{j}, q_{jj'},J} $ be semilattice direct systems of Boolean algebras and $(\varphi, f_{i})$ a morphism from $\mathbb{A}$ to $\mathbb{B}$. Then $h\colon \PLA\to\PLB$, defined as
\[ 
h(a):= f_{i}(a),
\]
where $i\in I$ is the index such that $a\in A_i$, is a homorphism of involutive bisemilattices.
\end{lemma}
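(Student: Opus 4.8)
The plan is to verify directly that $h$ respects each fundamental operation of involutive bisemilattices. First I would record that $h$ is well defined: since the universe of $\PLA$ is the disjoint union $\bigsqcup_{I} A_i$, every element lies in a \emph{unique} component $A_i$, so setting $h(a):=f_i(a)$ is unambiguous, and $f_i(a)\in B_{\varphi(i)}$ shows that $h$ carries $A_i$ into $B_{\varphi(i)}$. The key simplification is then to cut down the number of operations to be checked: by axioms (I5) and (I8) the operations $\cdot$ and $1$ are term-definable from $+$, ${}'$ and $0$ (via $x\cdot y\approx(x'+y')'$ and $1\approx 0'$), and these identities hold in both $\PLA$ and $\PLB$ because both are involutive bisemilattices by Theorem~\ref{the: rappresentazione Plonka}(1). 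Consequently a map preserving $+$, ${}'$ and $0$ automatically preserves $\cdot$ and $1$, so it suffices to treat those three.

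The substantial case is the binary operation $+$. Take $a_1\in A_{i_1}$, $a_2\in A_{i_2}$ and set $j=i_1\vee i_2$. By Definition~\ref{def: somma di Plonka} we have $a_1+a_2=p_{i_1 j}(a_1)+p_{i_2 j}(a_2)$, the sum now taken inside the Boolean algebra $\A_j$, whence $h(a_1+a_2)=f_j\bigl(p_{i_1 j}(a_1)+p_{i_2 j}(a_2)\bigr)$. Here I would apply two facts in turn: that $f_j$ is a Boolean homomorphism, hence distributes over the sum in $\A_j$; and that the commuting square defining the morphism $(\varphi,f_i)$ gives $f_j\circ p_{i_r j}=q_{\varphi(i_r)\varphi(j)}\circ f_{i_r}$ for $r=1,2$, since $i_r\leqs j$. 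This turns $h(a_1+a_2)$ into $q_{\varphi(i_1)\varphi(j)}(f_{i_1}(a_1))+q_{\varphi(i_2)\varphi(j)}(f_{i_2}(a_2))$, computed inside $\B_{\varphi(j)}$. On the other side, computing $h(a_1)+h(a_2)$ in $\PLB$ from $h(a_r)=f_{i_r}(a_r)\in B_{\varphi(i_r)}$ produces the P\l onka sum over the index $\varphi(i_1)\vee\varphi(i_2)$; the two expressions coincide precisely because $\varphi$ is a semilattice homomorphism, so $\varphi(i_1)\vee\varphi(i_2)=\varphi(i_1\vee i_2)=\varphi(j)$, making the same bonding maps $q$ and the same target component appear.

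The unary and nullary cases are lighter. For ${}'$, an element $a\in A_i$ has $a'=a'^{\A_i}$ computed in its own component (the P\l onka index being just $i$), and since $f_i$ preserves complements, $h(a')=f_i(a')=\bigl(f_i(a)\bigr)'=(h(a))'$, the final complement taken in $B_{\varphi(i)}$ — exactly how ${}'$ acts on $h(a)$ in $\PLB$. For the constant, $0^{\PLA}=0^{\A_{i_0}}$ lies in the least component, so $h(0^{\PLA})=f_{i_0}(0^{\A_{i_0}})=0^{\B_{\varphi(i_0)}}$; to identify this with $0^{\PLB}=0^{\B_{j_0}}$ I would invoke that $\varphi$, as a homomorphism of semilattices \emph{with} lower bound, sends the least element $i_0$ of $I$ to the least element $j_0$ of $J$, so that the two zeros live in the same component and agree.

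The main obstacle — indeed the only step that is not purely formal — is the reconciliation in the $+$ case of the two ways of forming the join: performing the semilattice join of indices \emph{before} applying the maps $f$ (inside $\A_j$) versus \emph{after} (inside $\PLB$). Everything hinges on commuting these operations, which is supplied exactly by the commuting square of $(\varphi,f_i)$ together with the identity $\varphi(i_1\vee i_2)=\varphi(i_1)\vee\varphi(i_2)$. Once $+$, ${}'$ and $0$ are settled, $\cdot$ and $1$ follow immediately from the term-definability noted above, completing the proof that $h$ is a homomorphism of involutive bisemilattices.
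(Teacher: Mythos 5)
Your proof is correct and takes essentially the same route as the paper's: the heart of both arguments is the binary-operation computation combining the P\l onka sum definition, the fact that $f_j$ is a Boolean homomorphism, the commuting square $f_j\circ p_{i_r j}=q_{\varphi(i_r)\varphi(j)}\circ f_{i_r}$, and the identity $\varphi(i_1\vee i_2)=\varphi(i_1)\vee\varphi(i_2)$, and your treatment of the constant and of ${}'$ matches the paper's as well. The only (cosmetic) difference is that where the paper verifies $\wedge$ and declares $\vee$ analogous, you verify $+$ and recover $\cdot$ and $1$ via the term-definability identities (I5) and (I8), which is a legitimate shortcut since both P\l onka sums are involutive bisemilattices by Theorem \ref{the: rappresentazione Plonka}.
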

\begin{proof}
The map $h$ is well defined for every $i\in I$, as by assumption $f_i$ is homomorphism of Boolean algebras. We only have to check that $h$ is compatible with all the operations of an involutive bisemilattice. To simplify the notation we set $\A=\PLA$, $\B=\PLB$. 

As regards the constants (we give details of one of them only), let $i_0$ be the lower bound in $I$ (if follows that $\varphi(i_0)$ is the lower bound in $J$), then $
h(0)= f_{i_0}(0)= 0_{\varphi(i_0)} = 0. $ Similarly, for negation, assume $a\in A_i$, for some $i\in I$: $h(\neg a) = f_{i}(\neg a) = \neg f_{i}(a) = \neg h(a)$.

As for binary operations (we consider $\wedge$ only as the case of $\vee$ is analogous), assume $a\in A_i$, $b\in A_j$ and set $k=i\vee j$:

\[
h(a\wedge b)) = h (p_{ik}(a)\wedge^{\A_k} p_{jk}(b)) = f_{k}(p_{ik}(a)\wedge^{\A_k} p_{jk}(b)) =
\]
\[
= f_{k}(p_{ik}(a))\wedge^{\B_{\varphi(k)}} f_{k}(p_{jk}(b)) = q_{\varphi(i)\varphi(k)}(f_{i} (a))\wedge^{\B_{\varphi(k)}} q_{\varphi(j)\varphi(k)}(f_{j} (b)) = 
\]
\[ 
=(f_{i}(a)\wedge f_{j}(b)) = (h(a)\wedge h(b)),
\]
where the equality in the second line is justified by the commutativity of the following diagram, which holds for $i,j\leq k$, as, by assumption, $(\varphi, f_i)$ is morphism in Sem-dir-$\mathfrak{C}$). 

\begin{center}
\begin{tikzpicture}[scale=1]
\draw (-4,0) node {$ \B_{\varphi(i)} $};
	\draw (4,0) node {$ \B_{\varphi(k)} $};
	
	\draw [line width=0.8pt, ->] (-3.4,0) -- (3.3,0);
	\draw (0,-0.4) node {\begin{footnotesize}$q_{\varphi(i)\varphi(k)}$\end{footnotesize}};
	
	\draw [line width=0.8pt, <-] (3.3,3) -- (-3.3,3);
	\draw (0, 3.3) node {\begin{footnotesize}$p_{ik}$\end{footnotesize}};

	\draw (-4,3) node {$\A_i$}; 
	
	\draw (4,3) node {$\A_{k}$};
		
	\draw [line width=0.8pt, ->] (-4,2.6) -- (-4,0.4);
	\draw (-4.6,1.7) node {\begin{footnotesize}$f_i$\end{footnotesize}};
	\draw (4.6,1.7) node {\begin{footnotesize}$f_{k}$\end{footnotesize}};
	\draw [line width=0.8pt, <-] (4,0.4) -- (4,2.6);
	
\end{tikzpicture}
\end{center}
\end{proof}


\begin{theorem}\label{th: IBSL e Sem-dir-BA sono equivalenti}
The categories $\mathfrak{IBSL}$ and Sem-dir-$\mathfrak{BA}$ are equivalent.
\end{theorem}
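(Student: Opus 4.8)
The plan is to exhibit a pair of mutually inverse \emph{covariant} functors
\[
\mathcal{G}\colon \text{Sem-dir-}\mathfrak{BA}\to\mathfrak{IBSL},\qquad \mathcal{F}\colon\mathfrak{IBSL}\to\text{Sem-dir-}\mathfrak{BA},
\]
and to verify that the compositions $\mathcal{G}\circ\mathcal{F}$ and $\mathcal{F}\circ\mathcal{G}$ are the identities, as required by Definition \ref{def: equivalenza di categorie}. The functor $\mathcal{G}$ sends a semilattice direct system of Boolean algebras $\mathbb{A}$ to its P\l onka sum $\PL(\mathbb{A})$, which is an involutive bisemilattice by Theorem \ref{the: rappresentazione Plonka}(1), and sends a morphism $(\varphi,f_i)$ to the homomorphism $h$ produced in Lemma \ref{lemma: morfismi tra somme di Plonka}. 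The functor $\mathcal{F}$ runs backwards: by the uniqueness in Theorem \ref{the: rappresentazione Plonka}(2), every involutive bisemilattice $\B$ determines a unique semilattice direct system of Boolean algebras with P\l onka sum $\B$, and I let $\mathcal{F}(\B)$ be this system; on a homomorphism $h\colon\A\to\B$ I set $\mathcal{F}(h):=(\phih,\,h_{|A_i})$, where $\phih$ is the index map of Lemma \ref{lem: omomorfismo degli indici} and the $h_{|A_i}$ are the Boolean homomorphisms of Lemma \ref{lem: omomorfismi tra ibsl}.

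First I would check that $\mathcal{F}$ really lands in Sem-dir-$\mathfrak{BA}$ on morphisms, i.e. that $(\phih,h_{|A_i})$ satisfies the two defining conditions for a morphism of semilattice direct systems. Condition (i) is exactly Lemma \ref{lem: omomorfismo degli indici}, and that each $h_{|A_i}\colon A_i\to B_{\phih(i)}$ is a Boolean homomorphism is Lemma \ref{lem: omomorfismi tra ibsl}. The remaining point is the commutativity of the defining square: for $i\le i'$ one must prove $h_{|A_{i'}}\circ p_{ii'}=q_{\phih(i)\phih(i')}\circ h_{|A_i}$. I would obtain this by recovering the transition map intrinsically: for $a\in A_i$ and any $b\in A_{i'}$ one has $p_{ii'}(a)=a\vee(b\wedge b')$, since $b\wedge b'$ is the zero of $A_{i'}$ and $a\vee 0_{i'}=p_{ii'}(a)$ by the definition of the P\l onka sum. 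Applying the homomorphism $h$ then gives $h(p_{ii'}(a))=h(a)\vee(h(b)\wedge h(b'))=h(a)\vee 0_{\phih(i')}=q_{\phih(i)\phih(i')}(h(a))$, which is precisely the required identity.

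With the two functors in place, the rest is bookkeeping. Functoriality (preservation of identities and of composition) for both $\mathcal{F}$ and $\mathcal{G}$ follows because every assignment is defined componentwise and the composition laws agree: for $\mathcal{F}$ one checks $\varphi_{h\circ k}=\phih\circ\varphi_k$ and $(h\circ k)_{|A_i}=h_{|B_{\varphi_k(i)}}\circ k_{|A_i}$, which is exactly the composition of morphisms of direct systems, and dually for $\mathcal{G}$. Finally, $\mathcal{F}\circ\mathcal{G}=\mathrm{id}$ on objects is precisely the uniqueness of the P\l onka representation (decomposing the P\l onka sum of $\mathbb{A}$ returns $\mathbb{A}$), while $\mathcal{G}\circ\mathcal{F}=\mathrm{id}$ on objects is Theorem \ref{the: rappresentazione Plonka}(2) (the P\l onka sum of the canonical decomposition of $\B$ is $\B$); on morphisms both composites act as the identity directly from the componentwise definitions.

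I expect the main obstacle to be twofold. The one genuinely non-formal step is the commutativity of the square for $\mathcal{F}(h)$: one must recover the transition maps $p_{ii'}$ of the unique decomposition from the involutive bisemilattice operations and verify they are respected by $h$, which is where the concrete shape of the P\l onka representation enters. The second, more delicate, point is reconciling the argument with the strict notion of equivalence in Definition \ref{def: equivalenza di categorie}: Theorem \ref{the: rappresentazione Plonka}(2) yields only an isomorphism $\B\cong\PL(\mathcal{F}(\B))$, so strictly $\mathcal{G}\circ\mathcal{F}$ is naturally isomorphic to $\mathrm{id}_{\mathfrak{IBSL}}$. I would handle this either by invoking the canonical such isomorphism (reading equivalence up to natural isomorphism, as is standard) or by fixing once and for all representatives that identify each $\B$ with its P\l onka sum, so that the equality holds on the nose.
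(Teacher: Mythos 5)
Your proposal is correct and takes essentially the same route as the paper: the same pair of mutually inverse functors built from Theorem \ref{the: rappresentazione Plonka} and Lemmas \ref{lem: omomorfismi tra ibsl}, \ref{lem: omomorfismo degli indici} and \ref{lemma: morfismi tra somme di Plonka}, with the same verification that the composites are identities. If anything you are slightly more careful than the paper, which asserts the commutativity of the square for $\mathcal{F}(h)$ without argument (your identity $p_{ii'}(a)=a\vee(b\wedge b')$, with $b\wedge b'$ the zero of $A_{i'}$, supplies exactly the missing verification) and silently treats the isomorphism $\B\cong\PL(\mathbb{B})$ as an equality, a point you correctly flag and resolve by reading the equivalence up to natural isomorphism or fixing canonical representatives.
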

\begin{proof}
The equivalence is proved by defining the following functors: 
\[
\begin{tikzcd}[row sep = tiny, arrows = {dash}]
& & \mathcal{F} &  & & \\
& & & & & \\
& & & & & \\
  & \mathfrak{IBSL}\arrow[rr, bend left = 25, rightarrow] &  & \text{Sem-dir-}\mathfrak{BA}\arrow[ll, bend left = 25, rightarrow] &                                  &  \\
  & & & & & \\
& & & & & \\
& & \mathcal{G} & & &   
  \end{tikzcd}
  \] 
$\mathcal{F}$ associates to an involutive bisemilattices $\A\cong\PLA$ (due to Theorem \ref{the: rappresentazione Plonka}), the semilattice direct system of Boolean algebras $\mathbb{A}$. On the other hand, for a semilattice direct system of Boolean algebras $\mathbb{A}$, we define $\mathcal{G}(\mathbb{A}):=\PLA$. 

We firstly check that $\mathcal{F}$ and $\mathcal{G}$ are controvariant functors.

Let $\A,\B\in\IBSL$ such that $\A\cong\PLA$ and $\B\cong\PLB$, with $\mathbb{A}=\pair{\A_i, p_{ii'}, I}$ and $\mathbb{B}=\pair{\B_j, q_{jj'}, J}$ semilattice direct systems of Boolean algebras. Then, for every IBSL-homomorphism $h\colon \A\to\B$, we define $\mathcal{F}(h):= (\varphi_{h}, h_{|A_{i}} ) $, where $\varphi_{h}$ is the semilattice homomorphism defined in Lemma and $h_{|A_{i}}$ is the restriction of $h$ on the Boolean components $\A_i$ of the P\l onka sum corresponding to $\A$. Lemmas \ref{lem: omomorfismi tra ibsl} \ref{lem: omomorfismo degli indici} guarantee that $\varphi_{h}$ is a semilattice homomorphism and that $h_{|A_{i}}$ is a Boolean homomorphism in each component $\A_i $ of the P\l onka sum $\PLA$. Moreover, the following diagram is commutative for each $i\leq i'$ (notice that $i\leq i'$ implies $\varphi_{h}(i)\leq\varphi_{h}(i')$)
\begin{center}
\begin{tikzpicture}[scale=1]
\draw (-4,0) node {$ \B_{\varphi_{h}(i)} $};
	\draw (4,0) node {$ \B_{\varphi_{h}(i')} $};
	
	\draw [line width=0.8pt, ->] (-3.3,0) -- (3.3,0);
	\draw (0,-0.4) node {\begin{footnotesize}$q_{\varphi_{h}(i)\varphi_{h}(i')}$\end{footnotesize}};
	
	\draw [line width=0.8pt, <-] (3.3,3) -- (-3.3,3);
	\draw (0, 3.3) node {\begin{footnotesize}$p_{ii'}$\end{footnotesize}};

	\draw (-4,3) node {$\A_i$}; 
	
	\draw (4,3) node {$\A_{i'}$};
		
	\draw [line width=0.8pt, ->] (-4,2.6) -- (-4,0.4);
	\draw (-4.6,1.7) node {\begin{footnotesize}$h_{|A_{i}}$\end{footnotesize}};
	\draw (4.6,1.7) node {\begin{footnotesize}$h_{|A_{i'}}$\end{footnotesize}};
	\draw [line width=0.8pt, <-] (4,0.4) -- (4,2.6);
	
\end{tikzpicture}
\end{center}

Therefore $\mathcal{F}(h)$ is a morphism from $\mathbb{A}$ to $\mathbb{B}$, showing that $\mathcal{F}$ is a covariant functor.

As concern $\mathcal{G}$, by Theorem \ref{the: rappresentazione Plonka} we know that $\PLA$ is an involutive bisemilattice. Moreover, if $(\varphi, f_i)\colon \mathbb{A}\to\mathbb{B}$ is a morphism between the semilattice direct systems of Boolean algebras $\mathbb{A}$ and $\mathbb{B}$, then set $\mathcal{G}(\varphi, f_i):= h $, as defined in Lemma \ref{lemma: morfismi tra somme di Plonka}, which assures that $\mathcal{G}(\varphi, f_i)$ is a homomorphism from $\PLA$ to $\PLB$.

We are left with verifying that the composition of the two functors gives the identity in both categories. Let $\A\in\IBSL$, such that $\A\cong\PLA $then 
\[
\mathcal{G}(\mathcal{F}(\A)) = \mathcal{G}(\mathcal{F}(\PLA)) = \mathcal{G}(\mathbb{A}) = \PLA = \A;
\]  
\[
\mathcal{F}(\mathcal{G}(\mathbb{A})) = \mathcal{F}(\PLA) = \mathbb{A}.
\]    
\end{proof}

Recall that a \emph{Stone space} is topological space which is compact, Hausdorff and totally disconnected. Stone spaces can be viewed as a category, which we refer to as $\mathfrak{SA}$, with continuous maps as morphisms.

It is well known that the category of Stone spaces is the dual of the category of Boolean algebras \cite{Stone37}.
The above statement, combined with Theorem \ref{teorema: duality Sem-dir Sem-inv}, gives immediately the following
\begin{corollary}\label{cor: Sem-dir-BA duale di Sem-inv-SA}
The categories Sem-dir-$\mathfrak{BA}$ and Sem-inv-$\mathfrak{SA}$ are dually equivalent.
\end{corollary}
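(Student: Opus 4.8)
The plan is simply to instantiate Theorem~\ref{teorema: duality Sem-dir Sem-inv} with the concrete pair of categories at hand. Taking $\mathfrak{C} = \mathfrak{BA}$ and $\mathfrak{D} = \mathfrak{SA}$, the hypothesis of that theorem requires $\mathfrak{BA}$ and $\mathfrak{SA}$ to be dually equivalent, and this is precisely the content of classical Stone duality \cite{Stone37}. Thus I would first recall the two controvariant functors implementing Stone duality: the functor sending a Boolean algebra $\A$ to its Stone space of prime (equivalently, ultra-) filters, and the functor sending a Stone space to its Boolean algebra of clopen subsets. These are mutually inverse controvariant functors between $\mathfrak{BA}$ and $\mathfrak{SA}$, so they witness the dual equivalence required as input.

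Having named these functors $\mathcal{F}\colon\mathfrak{BA}\to\mathfrak{SA}$ and $\mathcal{G}\colon\mathfrak{SA}\to\mathfrak{BA}$ with $\mathcal{G}\circ\mathcal{F}=\mathrm{id}_{\mathfrak{BA}}$ and $\mathcal{F}\circ\mathcal{G}=\mathrm{id}_{\mathfrak{SA}}$, I would then invoke Theorem~\ref{teorema: duality Sem-dir Sem-inv} verbatim. Its conclusion, applied to this input, states that $\text{Sem-dir-}\mathfrak{BA}$ and $\text{Sem-inv-}\mathfrak{SA}$ are dually equivalent, which is exactly the assertion of the corollary. Concretely, the lifted functors $\widetilde{\mathcal{F}}$ and $\widetilde{\mathcal{G}}$ supplied by Lemma~\ref{lemma: funtori Sem-dir Sem-inv} provide the dual equivalence between the two system categories.

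There is essentially no obstacle to overcome here: the corollary is a direct specialization, and all the categorical bookkeeping (controvariance of the lifted functors, preservation of semilattice index homomorphisms, the identity conditions on composites) has already been discharged in Lemma~\ref{lemma: funtori Sem-dir Sem-inv} and Theorem~\ref{teorema: duality Sem-dir Sem-inv}. The only point worth a word of care is confirming that Stone duality is expressed in the form demanded by Definition~\ref{def: equivalenza di categorie}, namely as a pair of controvariant functors whose composites are the identities on the respective categories; this is standard and I would simply cite \cite{Stone37} for it rather than reprove it.
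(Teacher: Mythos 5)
Your proposal is correct and follows exactly the paper's own route: the paper derives this corollary immediately by combining classical Stone duality \cite{Stone37} with Theorem~\ref{teorema: duality Sem-dir Sem-inv}, just as you do. The extra care you take in naming the ultrafilter/clopen-set functors and checking they fit Definition~\ref{def: equivalenza di categorie} is harmless elaboration of the same argument.
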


By Theorem \ref{th: IBSL e Sem-dir-BA sono equivalenti}, $\cIBSL$ is equivalent to the category of semilattice direct systems of Boolean algebras. Due to Corollary \ref{cor: Sem-dir-BA duale di Sem-inv-SA}, we have then a first abstract characterization of the dual category of $\cIBSL$.

\begin{theorem}\label{th: dualita ibsl}
The category Sem-inv-$\SA$ and $ \cIBSL $ are dually equivalent.
\end{theorem}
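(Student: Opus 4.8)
The plan is to obtain the desired dual equivalence simply by composing the equivalence established in Theorem \ref{th: IBSL e Sem-dir-BA sono equivalenti} with the dual equivalence of Corollary \ref{cor: Sem-dir-BA duale di Sem-inv-SA}, using Sem-dir-$\mathfrak{BA}$ as the bridge category. First I would recall the two pairs of functors in play. From Theorem \ref{th: IBSL e Sem-dir-BA sono equivalenti} we have covariant functors $\mathcal{F}\colon\cIBSL\to\text{Sem-dir-}\mathfrak{BA}$ and $\mathcal{G}\colon\text{Sem-dir-}\mathfrak{BA}\to\cIBSL$ satisfying $\mathcal{G}\circ\mathcal{F}=id_{\cIBSL}$ and $\mathcal{F}\circ\mathcal{G}=id_{\text{Sem-dir-}\mathfrak{BA}}$. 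From Corollary \ref{cor: Sem-dir-BA duale di Sem-inv-SA} we have controvariant functors, which I will denote $\mathcal{H}\colon\text{Sem-dir-}\mathfrak{BA}\to\text{Sem-inv-}\SA$ and $\mathcal{K}\colon\text{Sem-inv-}\SA\to\text{Sem-dir-}\mathfrak{BA}$, satisfying $\mathcal{K}\circ\mathcal{H}=id_{\text{Sem-dir-}\mathfrak{BA}}$ and $\mathcal{H}\circ\mathcal{K}=id_{\text{Sem-inv-}\SA}$.

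Next I would set $\mathcal{H}\circ\mathcal{F}\colon\cIBSL\to\text{Sem-inv-}\SA$ and $\mathcal{G}\circ\mathcal{K}\colon\text{Sem-inv-}\SA\to\cIBSL$. Since the composite of a covariant functor with a controvariant one is controvariant, both of these maps are controvariant functors. It then remains only to verify that they are mutually inverse, which follows from associativity of functor composition together with the two pairs of identities above:
\[
(\mathcal{G}\circ\mathcal{K})\circ(\mathcal{H}\circ\mathcal{F})=\mathcal{G}\circ(\mathcal{K}\circ\mathcal{H})\circ\mathcal{F}=\mathcal{G}\circ\mathcal{F}=id_{\cIBSL},
\]
and symmetrically
\[
(\mathcal{H}\circ\mathcal{F})\circ(\mathcal{G}\circ\mathcal{K})=\mathcal{H}\circ(\mathcal{F}\circ\mathcal{G})\circ\mathcal{K}=\mathcal{H}\circ\mathcal{K}=id_{\text{Sem-inv-}\SA}.
\]
By Definition \ref{def: equivalenza di categorie} and the ensuing notion of dual equivalence, this shows that $\cIBSL$ and Sem-inv-$\SA$ are duals under the controvariant functors $\mathcal{H}\circ\mathcal{F}$ and $\mathcal{G}\circ\mathcal{K}$.

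I expect there to be essentially no genuine obstacle here: the result is a formal consequence of the two earlier statements. The only points demanding any care are the bookkeeping of variance (confirming that pairing exactly one covariant with one controvariant functor yields a controvariant composite, so that an \emph{equivalence} composed with a \emph{dual} equivalence is again a dual equivalence) and checking that the strict identities survive the composition. This last point is precisely where the strict notion of (dual) equivalence adopted in Definition \ref{def: equivalenza di categorie} pays off: because both the equivalence of Theorem \ref{th: IBSL e Sem-dir-BA sono equivalenti} and the duality of Corollary \ref{cor: Sem-dir-BA duale di Sem-inv-SA} are realized by functors that compose \emph{on the nose} to the relevant identity functors, the displayed computation closes the argument immediately. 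Had the equivalences only been stated up to natural isomorphism, one would instead have to assemble the composite natural transformations and check their coherence; with the present setup, however, the plain associativity computation suffices.
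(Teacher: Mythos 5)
Your proposal is correct and follows exactly the paper's route: the paper obtains Theorem \ref{th: dualita ibsl} by combining the equivalence $\cIBSL \simeq$ Sem-dir-$\mathfrak{BA}$ (Theorem \ref{th: IBSL e Sem-dir-BA sono equivalenti}) with the dual equivalence of Corollary \ref{cor: Sem-dir-BA duale di Sem-inv-SA}, leaving the composition implicit. You have merely written out the variance bookkeeping and the strict-identity computation that the paper takes for granted.
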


Theorem \ref{th: dualita ibsl} gives a description of the dual category of involutive bisemilattices in terms of Stone spaces, i.e. the dual category of Boolean algebras, objects coming into play due to the representation Theorem \ref{the: rappresentazione Plonka}.  

The above theorem together with Theorem \ref{the: rappresentazione Plonka} should be compared with the following theorem due to Haimo \cite{Haimo}, where \emph{direct limits} are considered instead of P\l onka sums. In the following statement, $\lim_{\longrightarrow}$,  
$\lim_{\longleftarrow}$ denote the direct and inverse limit, respectively.
\begin{theorem}[\cite{Haimo}, Th. 9]\label{th: Haimo}
Let $\{\A_i\}$ be a direct system of Boolean algebras and $\{\A_{i}^{*}\}$ the corresponding family of Stone spaces. Then $$(\lim_{\longrightarrow} \A_i )^{*} \cong\lim_{\longleftarrow}\A_{i}^{*}.$$
\end{theorem}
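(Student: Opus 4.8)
The plan is to deduce the homeomorphism directly from Stone duality, exploiting the general principle that a contravariant equivalence of categories interchanges colimits and limits; this is the classical argument underlying Haimo's statement. Recall that the assignment $\A\mapsto\A^*$ (the space of Boolean homomorphisms into $\two$, equivalently of ultrafilters, topologized by the clopen sets coming from the elements of $\A$) is one half of a dual equivalence between $\mathfrak{BA}$ and $\SA$; equivalently, $(-)^*$ is a \emph{covariant} equivalence $\mathfrak{BA}^{\mathrm{op}}\to\SA$. Since the bonding maps $p_{ii'}\colon\A_i\to\A_{i'}$ of a direct system dualize to maps $p_{ii'}^*\colon\A_{i'}^*\to\A_i^*$, the family $\{\A_i^*,p_{ii'}^*\}$ is genuinely an inverse system, and $\lim_{\longleftarrow}\A_i^*$ is the object to be compared with $(\lim_{\longrightarrow}\A_i)^*$.

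First I would note that $\lim_{\longrightarrow}\A_i$ is by definition the colimit of the given direct system in $\mathfrak{BA}$; this colimit exists because $\mathfrak{BA}$, being a variety, is cocomplete (and for a directed system it is computed on underlying sets). Viewing this colimit as a limit of the opposite diagram in $\mathfrak{BA}^{\mathrm{op}}$, and using that an equivalence of categories preserves all limits, the covariant equivalence $(-)^*\colon\mathfrak{BA}^{\mathrm{op}}\to\SA$ carries it to the limit in $\SA$ of the diagram $\{\A_i^*,p_{ii'}^*\}$. Hence $(\lim_{\longrightarrow}\A_i)^*$ is a categorical limit of the inverse system $\{\A_i^*\}$ in $\SA$, and the statement follows once this categorical limit is identified with the concrete inverse limit $\lim_{\longleftarrow}\A_i^*$.

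I would then make the homeomorphism explicit via the universal property of the colimit. A point of $(\lim_{\longrightarrow}\A_i)^*$ is a Boolean homomorphism $\lim_{\longrightarrow}\A_i\to\two$, which by the universal property is exactly a compatible family of homomorphisms $x_i\colon\A_i\to\two$ with $x_i=x_{i'}\circ p_{ii'}$ whenever $i\leq i'$ --- that is, a point $(x_i)$ of the inverse limit $\lim_{\longleftarrow}\A_i^*\subseteq\prod_i\A_i^*$. This assignment is a bijection, and it is continuous since the topology on each side is generated by the clopen sets indexed by the elements of the $\A_i$.

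The step I expect to be the main obstacle is the identification of the abstract categorical limit with the usual topological inverse limit: one must verify that the limit of $\{\A_i^*\}$ computed in $\SA$ is the familiar closed subspace of $\prod_i\A_i^*$ cut out by the compatibility conditions, and that this subspace is again a Stone space --- compactness and the Hausdorff property pass to $\prod_i\A_i^*$ by Tychonoff, total disconnectedness is inherited, and the compatibility locus is closed, hence compact. Granting this, the continuous bijection constructed above is automatically a homeomorphism, because a continuous bijection between compact Hausdorff spaces is a homeomorphism; this closes the argument.
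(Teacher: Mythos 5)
Your argument is sound, but note that there is nothing in the paper to compare it against: Theorem \ref{th: Haimo} is imported verbatim from Haimo \cite{Haimo} (his Theorem 9) purely as a point of comparison for Theorem \ref{th: dualita ibsl}, and the paper offers no proof of it. Judged on its own merits, your proof is the standard modern derivation and is complete: the contravariant equivalence $\mathrm{Hom}_{\mathfrak{BA}}(-,\two)$ turns the directed colimit into a limit, and you correctly identify the two genuinely nontrivial verifications --- that the categorical limit in $\SA$ is the concrete compatibility locus in $\prod_i \A_i^{*}$ (closed because the factors are Hausdorff, compact by Tychonoff, totally disconnected as a subspace of a product of such spaces, hence again Stone), and that the point-level bijection given by the universal property is continuous. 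On the latter, the reason your subbasic-clopen check works is that the system is directed, so every element of $\lim_{\longrightarrow}\A_i$ is the image of an element of some $\A_i$ under the canonical map; it is worth saying this explicitly, since for non-directed diagrams the continuity argument would need the Hom-functor formulation instead. The final compactness step (continuous bijection from a compact space to a Hausdorff space is a homeomorphism) is correct and in fact makes your first two paragraphs, on abstract preservation of limits, logically redundant --- the explicit construction already closes the argument. By contrast, Haimo's original 1951 proof predates this categorical machinery and proceeds element-theoretically through the Stone representation; your version buys brevity and generality. It is also worth observing that your proof is, in spirit, a limit-level instance of the mechanism this paper deploys in Section \ref{sec: dir,inv systems}: Theorem \ref{teorema: duality Sem-dir Sem-inv} dualizes entire semilattice systems through $\widetilde{\mathcal{F}}$ without ever forming limits, precisely because for involutive bisemilattices the relevant amalgam is the P\l onka sum rather than the direct limit --- which is why the paper cites Haimo's theorem as an analogue rather than using or reproving it.
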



In Theorem \ref{th: stron-inv SA equivalente a GR inv} (see below) we will give a \emph{concrete} topological axiomatization of the dual space of an involutive bisemilattice via Gierz and Romanowska duality (see Theorem \ref{th: dualita Romanowska}). Logically motivated by the fact that involutive bisemilattices are intrinsically characterized by an involutive negation, we aim at recovering a unary operation in the dual space: we expand GR spaces to GR spaces with involution.


\begin{definition}\label{def: GR-space with involution}
A \emph{GR space with involution} is a GR space $\G$ with a continous map $\neg :G\to G$ such that for any $a \in G $:
\begin{enumerate}[label=\textbf{G\arabic*}.]
\item $\neg (\neg a)=a$
\item $\neg(a\ast b)=\neg a\ast \neg b$
\item if $a\leq b$ then $\neg b \sqsubseteq \neg a$
\item $\neg c_0 = c_1$, $\neg c_1 = c_0$ and $\neg c_{\alpha} = c_{\alpha}$
\item The space $\mathrm{Hom_{_{GR}}}(\A,\3)$ (see Section \ref{sec: preliminari}) equipped with natural involution $\neg$, i.e. $\neg\varphi (a) = (\varphi(\neg a))'$ satisfies $\varphi\cdot (\neg\varphi + \psi) = \psi \cdot \varphi$, where operations are defined pointwise;
\item there exist $\varphi_{0}, \varphi_{1}\in\HomGR $ such that $\neg\varphi_{0}=\varphi_{1}$ and $\varphi + \varphi_{0} = \varphi$, for each $\varphi\in\HomGR$. 
\end{enumerate}
\end{definition}
\vspace{5pt}
\begin{example}\label{es: WK come GR space with involution}
$\WK $ equipped with discrete topology is the canonical example of GR space with involution. 
\end{example}
\begin{definition}\label{def: categoria IGR}
$\mathfrak{IGR}$ is the category whose objects are GR spaces with involution and whose morphisms are GR-morphisms preserving involution. 
\end{definition}

Given a GR space with involution $\G$, we can consider its GR space reduct (simply its involution free reduct), call it $\A$, which can be associated to the dual distributive bisemilattice $\widehat{\A}=\HomGR$. Aiming at turning it into an involutive bisemilattice, we define an involution on $\widehat{\A}$ as follows: 
$$\neg \Phi (a) = (\Phi(\neg a))',$$
for each $\Phi\in \widehat{\A}$ and $a\in G$, where $\neg$ and $'$ are the involutions of $\G$ and $\WK$, respectively. 

Adopting the same idea, given an arbitrary involutive bisemilattice $\mathbf{I}$, we consider its bisemilattice reduct $\S = \pair{I, +, \cdot}$, which is distributive \cite[Proposition 20]{Bonzio16}, and therefore can be associated to its dual GR space, $\widehat{\S} = \Homb$ (see Section \ref{sec: preliminari}). The bisemilattice $\3$ turns into $\WK$ just by adding the usual involution and the constants 0, 1,  so it makes sense to define an involution on $\widehat{\S}$ as: $$\neg\varphi (x)=(\varphi(x'))', $$ for any $\varphi\in\widehat{\S}$ and $x\in S$.

\begin{lemma}\label{lem: chiusura di A^ rispetto all'involuzione}
Let $\mathbf{G}$ be a GR-space with involution and $\A$ its GR-space reduct. Then, if $\Phi\in\widehat{\A}$ then $\neg\Phi\in\widehat{\A}$. Moreover, $\widehat{\G}=\pair{\widehat{\A},\neg} $ is an involutive bisemilattice.
\end{lemma}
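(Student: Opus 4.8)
The plan is to establish the two assertions separately, both by reduction to the axioms of a GR space with involution together with the arithmetic of the involution ${}'$ on $\WK$ (where $0'=1$, $1'=0$, $\alpha'=\alpha$), and the fact that $\widehat{\A}$ is already a bisemilattice by Gierz--Romanowska duality.

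First I would show that $\neg$ sends $\widehat{\A}$ into itself, i.e.\ that $\neg\Phi\colon a\mapsto (\Phi(\neg a))'$ is again a morphism of $\mathfrak{GR}$ whenever $\Phi$ is. Continuity is immediate, since $\neg\Phi$ is a composite of the continuous map $\neg\colon G\to G$ (part of the data of $\G$), of $\Phi$, and of the involution ${}'$ on the finite discrete space $\3$. To see that $\neg\Phi$ preserves $\ast$ I would combine axiom \textbf{G2}, $\neg(a\ast b)=\neg a\ast\neg b$, with the identity $(x\ast y)'=x'\ast y'$ valid in $\3$ (a one-line case split on whether $y=\alpha$). Preservation of the three constants follows directly from axiom \textbf{G4} and the values of ${}'$. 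The only slightly delicate clause is preservation of the order: from $a\leq b$, axiom \textbf{G3} gives $\neg b\sqsubseteq\neg a$; since $\Phi$ preserves $\ast$ and $\leq$, it automatically preserves the derived order $\sqsubseteq$ (unfold its definition $a\sqsubseteq b\iff a\ast b\leq b \text{ and } b\ast a=b$), so $\Phi(\neg b)\sqsubseteq\Phi(\neg a)$ in $\3$; finally ${}'$ sends $\sqsubseteq$ to the reverse of $\leq$ on $\3$, i.e.\ $x\sqsubseteq y$ implies $y'\leq x'$ (verified by inspection, mirroring \textbf{G3} on the object $\3$), which yields $(\Phi(\neg a))'\leq(\Phi(\neg b))'$, that is $\neg\Phi(a)\leq\neg\Phi(b)$.

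Having established that $\neg$ is a well-defined unary operation on $\widehat{\A}$, I would next verify that $\pair{\widehat{\A},\neg}$ satisfies (I1)--(I8). The bisemilattice reduct $\pair{\widehat{\A},+,\cdot}$ is a bisemilattice by Theorem \ref{th: dualita Romanowska}, which already supplies the join-semilattice axioms (I1)--(I3). The constants $0,1$ are taken to be the elements $\varphi_0,\varphi_1\in\widehat{\A}$ provided by axiom \textbf{G6}; that same axiom gives $\varphi_0+\Phi=\Phi$, i.e.\ (I7), and $\varphi_1=\neg\varphi_0$, i.e.\ (I8). For (I4) one computes, using \textbf{G1} and $(x')'=x$ in $\3$,
\[
\neg\neg\Phi(a)=\bigl((\Phi(\neg\neg a))'\bigr)'=\Phi(a).
\]
Axiom (I5), the De Morgan law $\Phi\cdot\Psi=\neg(\neg\Phi+\neg\Psi)$, follows pointwise from the identity $u\cdot v=(u'+v')'$ in $\WK$ together with \textbf{G1}, all operations on $\widehat{\A}$ being pointwise. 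Finally, (I6), namely $\Phi\cdot(\neg\Phi+\Psi)=\Phi\cdot\Psi$, is exactly axiom \textbf{G5} (using commutativity of $\cdot$).

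I expect the main obstacle to be the first half---checking that $\neg\Phi$ is a genuine $\mathfrak{GR}$-morphism---and within it the order-preservation clause, the one place where one must simultaneously exploit \textbf{G3}, the automatic preservation of the derived order $\sqsubseteq$ by $\mathfrak{GR}$-morphisms, and the order-reversing behaviour of ${}'$ on $\3$. Once closure under $\neg$ is secured, the identities (I1)--(I8) reduce to routine pointwise computations in $\WK$, with (I6), (I7) and (I8) being essentially built into axioms \textbf{G5} and \textbf{G6} by design.
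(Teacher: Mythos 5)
Your proof is correct and takes essentially the same route as the paper's: continuity by composition, preservation of $\ast$, constants and order via \textbf{G2}--\textbf{G4} together with the behaviour of ${}'$ on $\3$, then (I1)--(I3) from the bisemilattice structure supplied by Gierz--Romanowska duality, (I4)--(I5) by pointwise computation, and (I6)--(I8) read off directly from \textbf{G5}--\textbf{G6}. If anything, your handling of the order clause is more careful than the paper's (which is terse and contains a typo at that step), since you make explicit both that $\mathfrak{GR}$-morphisms automatically preserve the derived order $\sqsubseteq$ and that ${}'$ carries $\sqsubseteq$ to the reverse of $\leq$ on $\3$.
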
 
\begin{proof}
Assuming that $\Phi$ is a morphism of GR spaces, we have to verify that also $\neg\Phi$ is, i.e. that it is a continuos map, preserving operation $\ast$, constants and the order $\leq$. Observe that $\neg\Phi$ is continuous as it is the composition of continuous maps. 

Concerning operations and constants, we have: \\
$\neg\Phi (a\ast b) = (\Phi \neg(a\ast b))' = (\Phi(\neg a\ast\neg b))'= (\Phi (\neg a)\ast\Phi (\neg b))' = (\Phi (\neg a))'\ast (\Phi (\neg b))'= \neg \Phi (a)\ast \neg \Phi (b)$. \\
 $\neg\Phi (c_0) = (\Phi (\neg c_0))' = (\Phi (c_1))'= 1' = 0$. 
 
Similarly, $\neg\Phi (c_1) = (\Phi (\neg c_1))' = (\Phi (c_0))'= 0' = 1$ and $\neg\Phi (c_\alpha) = (\Phi (\neg c_\alpha))' = (\Phi (c_\alpha))'= \alpha' = \alpha$. 
 
As for the order, let $a\leq b$, but then $\neg b \sqsubseteq \neg a$. Since $\Phi$ preserve both the orders, $\Phi (\neg b)\leq_{+}\Phi (\neg a)$, thus $(\Phi (\neg a))'\leq_{\cdot}(\Phi (\neg a))'$, i.e. $\neg\Phi ( a)\leq\neg\Phi ( b)$. 

To prove that $\widehat{G}$ is an involutive bisemilattice, we have to check that conditions \textbf{I1} to \textbf{I8} of Definition \ref{def: IBSL} hold for $\widehat{\G}$. Clearly, \textbf{I1}, \textbf{I2} and \textbf{I3} hold as $\widehat{\A}$ is a distributive bisemilattice, while \textbf{I6}, \textbf{I7} and $\textbf{I8}$ hold by definition. For the remaining ones, let $\Phi,\Psi\in\widehat{\A}$ and $a\in A$. \\
\textbf{I4.} $\neg (\neg \Phi (a)) = \neg\Phi(\neg (a))'= \Phi (\neg\neg a)'' = \Phi (a)$. \\
\textbf{I5.} $\neg (\Phi + \Psi) (a) = (\Phi + \Psi (\neg a))' = (\Phi (\neg a) + \Psi (\neg a))' = ('\Phi (\neg a))' \cdot (\Psi (\neg a))' = \neg\Phi (a)\cdot \neg \Psi (a) $. 
\end{proof}

\begin{proposition}\label{prop: isomorfismo G, G cap cap}
$\G\cong\hat{\hat{\G}}$.
\end{proposition}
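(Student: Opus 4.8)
The plan is to show that the isomorphism $\delta_{_A}$ of GR spaces provided by Gierz and Romanowska duality (equation \eqref{eq: isomorfismo A, Acapcap}) is \emph{already} an isomorphism of GR spaces with involution, so that no new map has to be constructed. The first observation I would record is that forgetting the involution commutes with taking duals: the GR-space reduct of the involutive bisemilattice $\widehat{\G}=\pair{\widehat{\A},\neg}$ is exactly the distributive bisemilattice $\widehat{\A}$, and consequently the GR-space reduct of the double dual $\widehat{\widehat{\G}}$ is precisely $\widehat{\widehat{\A}}$, the GR dual of $\widehat{\A}$. By Theorem \ref{th: dualita Romanowska} the map $\delta_{_A}\colon\A\to\widehat{\widehat{\A}}$, $\delta_{_A}(x)(\Phi)=\Phi(x)$, is already an isomorphism in $\mathfrak{GR}$. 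Hence the whole content of the proposition reduces to verifying that $\delta_{_A}$ intertwines the involution $\neg$ of $\G$ with the involution $\neg$ carried by $\widehat{\widehat{\G}}$.

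Next I would unwind the two nested definitions of the derived involution. On $\widehat{\A}$ it is $\neg\Phi(a)=(\Phi(\neg a))'$ for $\Phi\in\widehat{\A}$ and $a\in G$. Treating $\widehat{\G}$ itself as the involutive bisemilattice to be dualized, the involution on $\widehat{\widehat{\A}}$ is produced by the same recipe one level higher, namely $\neg\Xi(\Phi)=(\Xi(\neg\Phi))'$ for $\Xi\in\widehat{\widehat{\A}}$ and $\Phi\in\widehat{\A}$, where $\neg\Phi$ is the involution just recalled and the outer $'$ is the involution of $\WK$. The target identity is then $\delta_{_A}(\neg x)=\neg\,\delta_{_A}(x)$ for every $x\in G$, and the verification is a short computation, tested against an arbitrary $\Phi\in\widehat{\A}$:
\[
\delta_{_A}(\neg x)(\Phi)=\Phi(\neg x),\qquad
\bigl(\neg\,\delta_{_A}(x)\bigr)(\Phi)=\bigl(\delta_{_A}(x)(\neg\Phi)\bigr)'=\bigl((\neg\Phi)(x)\bigr)'=\bigl((\Phi(\neg x))'\bigr)'=\Phi(\neg x),
\]
the last step using that $'$ is an involution on $\3$, i.e. $(y')'=y$. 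Since the two sides agree on every $\Phi$, we get $\delta_{_A}(\neg x)=\neg\,\delta_{_A}(x)$, so $\delta_{_A}$ preserves $\neg$; being in addition an isomorphism in $\mathfrak{GR}$, it is an isomorphism in $\mathfrak{IGR}$, which yields $\G\cong\widehat{\widehat{\G}}$.

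I expect no genuine obstacle here, only bookkeeping care: one must keep the three distinct involutions apart (the one on $\G$, the derived one on $\widehat{\G}$, and the one on $\WK$ acting on $\3$), and one must confirm that the involution placed on $\widehat{\widehat{\G}}$ is indeed obtained by applying the dualization recipe to the involutive bisemilattice $\widehat{\G}$ rather than to its bare bisemilattice reduct. Once these points are fixed, the displayed calculation closes the argument. It is also worth noting in passing that $\neg\,\delta_{_A}(x)$ genuinely lies in $\widehat{\widehat{\A}}$ — equivalently, that $\widehat{\widehat{\G}}$ is a bona fide GR space with involution — which is the double-dual analogue of the closure established in Lemma \ref{lem: chiusura di A^ rispetto all'involuzione}.
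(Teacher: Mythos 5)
Your proposal is correct and follows essentially the same route as the paper: both take the Gierz--Romanowska isomorphism $\delta_{_A}\colon\A\to\widehat{\widehat{\A}}$ from Theorem \ref{th: dualita Romanowska} and reduce the proposition to checking that it preserves the involution, via the identical computation $(\neg\delta_{_A}(x))(\varphi)=(\delta_{_A}(x)(\neg\varphi))'=(\neg\varphi(x))'=(\varphi(\neg x))''=\varphi(\neg x)=\delta_{_A}(\neg x)(\varphi)$. Your explicit bookkeeping of the three involutions and of how the involution on $\widehat{\widehat{\G}}$ arises by dualizing $\widehat{\G}$ is a slightly more careful write-up of what the paper leaves implicit, but it is the same argument.
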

\begin{proof}
We make good use of the duality established in \cite{Romanowska}, from which it follows $\A\cong\widehat{\widehat{\A}}$, where $\A$ is the GR space reduct of $\G$. To prove our claim we only have to prove that the isomorphism, given by \eqref{eq: isomorfismo A, Acapcap}, $\delta_{_A}(x)(\varphi) = \varphi (x)$, for $ x\in A $ and $\varphi\in\widehat{\A} $, preserve the involution. This is easily checked, indeed \\

$$(\neg\delta_{_A} (x))(\varphi) = (\delta_{_A} (x)(\neg\varphi))' = (\neg\varphi(x))' = (\varphi(\neg x))'' = \varphi(\neg x). $$
\end{proof}


\begin{lemma}\label{lemma: chiusura di S^ rispetto all'involuzione}
Let $\mathbf{I}\in\IBSL$ with $\mathbf{S}$ its bisemilattice reduct. If $\varphi\in\widehat{\S}$ then $\neg\varphi\in\widehat{\S}$. Moreover, $\widehat{\I}=\pair{\widehat{\S},\neg} $ is a GR space with involution. 
\end{lemma}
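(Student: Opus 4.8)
The plan is to mirror the structure of Lemma~\ref{lem: chiusura di A^ rispetto all'involuzione}, but now running the argument on the \emph{algebra} side rather than the space side. The statement has two parts: first, that $\neg\varphi\in\widehat{\S}$ whenever $\varphi\in\widehat{\S}$ (i.e. $\neg$ maps GR-morphisms $\S\to\3$ to GR-morphisms), and second, that $\pair{\widehat{\S},\neg}$ satisfies the GR-space-with-involution axioms \textbf{G1}--\textbf{G6} of Definition~\ref{def: GR-space with involution}. Throughout, $\neg$ on $\widehat{\S}$ is defined by $\neg\varphi(x)=(\varphi(x'))'$, where $'$ on the left input is the involution of $\mathbf{I}$ and $'$ on the outer application is the involution of $\WK$ (extending $\3$).

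First I would check that $\neg\varphi$ is again a morphism of GR spaces, that is, a continuous map preserving $\ast$, the three constants and the order. Continuity is immediate since $\neg\varphi$ is a composite of continuous maps (the involution of $\mathbf{I}$, then $\varphi$, then the involution of $\3$, the latter two being automatically continuous on discrete $\3$). Preservation of $\ast$ and of the constants $c_0,c_1,c_\alpha$ is a direct pointwise computation using axioms \textbf{I4}--\textbf{I5} of Definition~\ref{def: IBSL} together with the defining tables of $\WK$: for instance $\neg\varphi(a\ast b)=(\varphi((a\ast b)'))'$ is unwound using $\neg(a\ast b)=\neg a\ast\neg b$-type identities in the algebra and the fact that $\varphi$ preserves $\ast$, exactly paralleling the first displayed line in the proof of Lemma~\ref{lem: chiusura di A^ rispetto all'involuzione}. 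For the order, if $a\leq b$ in $\S$ one transports along $x\mapsto x'$ (which reverses $\leq_{\cdot}$ into $\leq_{+}$) and uses that $\varphi$ preserves both induced orders, concluding $\neg\varphi(a)\leq\neg\varphi(b)$.

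Next I would verify the six axioms for $\pair{\widehat{\S},\neg}$. The GR-space reduct $\widehat{\S}=\Homb$ is already a GR space by Theorem~\ref{th: dualita Romanowska}, so the partially-ordered-left-normal-band-with-constants structure and the topological conditions come for free; only the involution axioms need attention. Axioms \textbf{G1} ($\neg\neg\varphi=\varphi$), \textbf{G2} ($\neg(\varphi\ast\psi)=\neg\varphi\ast\neg\psi$), \textbf{G3} (order reversal $\leq\,\rightsquigarrow\,\sqsubseteq$) and \textbf{G4} (the action of $\neg$ on $c_0,c_1,c_\alpha$) are again pointwise computations of the same flavour as above, using $(x')'=x$ in $\mathbf{I}$ and in $\WK$. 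The more delicate points are \textbf{G5} and \textbf{G6}, which are the axioms that genuinely encode the Boolean/involutive content: \textbf{G5} asserts the identity $\varphi\cdot(\neg\varphi+\psi)=\psi\cdot\varphi$ in $\widehat{\widehat{\S}}$, and \textbf{G6} asks for distinguished elements $\varphi_0,\varphi_1$ with $\neg\varphi_0=\varphi_1$ acting as a zero for $+$.

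The main obstacle I anticipate is \textbf{G5}, and to a lesser extent \textbf{G6}. For \textbf{G5} the clean route is to exploit the double-dual isomorphism rather than compute blindly: by \eqref{eq: isomorfismo S, Scapcap} we have $\S\cong\widehat{\widehat{\S}}$, and the involution transports across this isomorphism (this is precisely the content one proves in the style of Proposition~\ref{prop: isomorfismo G, G cap cap}), so the identity $\varphi\cdot(\neg\varphi+\psi)=\psi\cdot\varphi$ in $\widehat{\widehat{\S}}$ reduces to the corresponding identity $x\cdot(x'+y)=y\cdot x$ holding in $\mathbf{I}$; but that is a consequence of axiom~\ref{rmp}, namely $x\cdot(x'+y)\approx x\cdot y$, combined with commutativity of $\cdot$. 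Thus the plan is to \emph{pull \textbf{G5} back to an equation in $\mathbf{I}$} via the isomorphism, where it becomes a known identity of involutive bisemilattices. For \textbf{G6}, the candidates $\varphi_0,\varphi_1$ should be the morphisms $\S\to\3$ that are constantly $0$ and constantly $1$ on the top Boolean block (equivalently, the images under $\varepsilon_S$ of the constants $0,1$ of $\mathbf{I}$); one checks $\neg\varphi_0=\varphi_1$ directly from the definition of $\neg$ and that $\varphi+\varphi_0=\varphi$ for all $\varphi$ because $0$ is the additive identity (axiom~\textbf{I7}). Once \textbf{G5} and \textbf{G6} are dispatched this way, the remaining verifications are routine and the lemma follows.
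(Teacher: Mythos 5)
There is a genuine gap in your treatment of the first half of the lemma: you verify the wrong membership conditions. Here $\widehat{\S}=\mathrm{Hom_{b}}(\S,\3)$ is by definition the set of \emph{bisemilattice homomorphisms} from $\S$ to $\3$, so the claim ``$\varphi\in\widehat{\S}$ implies $\neg\varphi\in\widehat{\S}$'' requires exactly one thing: that $\neg\varphi$ preserves $+$ and $\cdot$. You instead propose to check that $\neg\varphi$ is a continuous map preserving $\ast$, the constants $c_0,c_1,c_\alpha$ and the order, i.e.\ the conditions for membership in $\mathrm{Hom_{_{GR}}}(\A,\3)$ --- that is the verification belonging to Lemma \ref{lem: chiusura di A^ rispetto all'involuzione}, the lemma on the space side that you set out to mirror. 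On the algebra side these checks do not even typecheck: the bisemilattice reduct $\S$ carries no topology (so continuity of $\varphi$, or of ``the involution of $\I$'', is meaningless), no operation $\ast$, and no nullary operations $c_0,c_1,c_\alpha$; in particular your appeals to ``the fact that $\varphi$ preserves $\ast$'' and to identities of the form $\neg(a\ast b)=\neg a\ast\neg b$ ``in the algebra'' have no content, so the argument as written cannot be carried out. The correct (and short) verification is the De Morgan computation the paper gives:
$\neg\varphi(x+y)=(\varphi((x+y)'))'=(\varphi(x'\cdot y'))'=(\varphi(x')\cdot\varphi(y'))'=(\varphi(x'))'+(\varphi(y'))'=\neg\varphi(x)+\neg\varphi(y)$, and analogously for $\cdot$; nothing else is needed.

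Your second half, by contrast, is essentially the paper's own argument: \textbf{G1}--\textbf{G4} by direct pointwise computation, and \textbf{G5}--\textbf{G6} not by brute force but by establishing $\I\cong\widehat{\widehat{\I}}$ (the map $\varepsilon_{_S}$ of \eqref{eq: isomorfismo S, Scapcap}, shown to preserve the involution and extended to the constants) and then pulling the two axioms back to the identities \textbf{I6} ($x\cdot(x'+y)\approx x\cdot y$, plus commutativity) and \textbf{I7}, \textbf{I8} ($0+x\approx x$, $1\approx 0'$) in $\I$; the paper likewise takes $\Phi_0=\varepsilon_{_S}(0)$ and $\Phi_1=\varepsilon_{_S}(1)$ as the witnesses for \textbf{G6}. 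One small type slip persists there too: the witnesses $\varphi_0,\varphi_1$ of \textbf{G6} live in $\mathrm{Hom_{_{GR}}}(\widehat{\S},\3)=\widehat{\widehat{\S}}$, so they are not ``morphisms $\S\to\3$''; only your parenthetical identification of them as the images of $0,1$ under $\varepsilon_{_S}$ is the correct one. Fix the first half as indicated and the proof is sound and coincides with the paper's.
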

\begin{proof}
Suppose that $\varphi\in\widehat{\S}$, i.e. it is a map preserving sum and multiplication. It suffices to verify that also $\neg\varphi $ preserves the two operations. $\neg\varphi (x+y)= (\varphi(x+y)')' = (\varphi(x'\cdot y'))'= (\varphi (x')\cdot\varphi(y'))' = (\varphi(x'))' +(\varphi(y'))'= \neg\varphi(x) + \neg\varphi(y) $. For multiplication the proof runs analogously. 

For the second part, by \cite{Romanowska}, we have that $\widehat{\S}$ is a GR space, thus we only have to check that $\neg$ has the required properties. Let $\varphi,\psi\in\widehat{\S}$ and $x\in S$; properties $\mathbf{G1}-\mathbf{G4}$ can be easily verified as follows: \\

 $\neg (\neg \varphi (x)) = \neg (\varphi(x'))'=(\varphi(x''))'' = \varphi(x)$.\\

$\neg (\varphi\ast\psi)(x) = (\varphi\ast\psi (x'))' = (\varphi (x')\ast\varphi(x'))'= (\varphi(x'))'\ast(\psi(x'))' = \neg\varphi(x)\ast\neg\psi(x)$.\\

 Let $\varphi\leq\psi$, i.e. $\varphi(x)\leq_{\cdot}\psi(x)$ for each $x\in S$. In particular $\varphi(x')\leq_{\cdot}\psi(x')$, thus $(\psi(x'))' \leq_{+}(\varphi(x'))'$, i.e. $\neg \psi\sqsubseteq \neg \varphi $. \\

 Let $\varphi_0 $, $ \varphi_1$ and $\varphi_{\alpha}$ the constant homorphisms (of bisemilattices) on $0$, $1$ and $\alpha$, respectively.  $\neg\varphi_0 (x)=(\varphi_0(x'))' = 0' = 1 = \varphi_{1}(x)$; $\neg\varphi_1 (x)=(\varphi_1(x'))' = 1' = 0 = \varphi_{0}(x)$; $\neg\varphi_{\alpha} (x)=(\varphi_{\alpha}(x'))' = \alpha' = \alpha = \varphi_{\alpha}(x)$. \\

In order to prove \textbf{G5} and \textbf{G6}, it is enough to show that $\I\cong\widehat{\widehat{\I}}$. 
Recall that the bisemilattice reduct $\S$ of $\I$ is isomorphic to $\widehat{\widehat{\S}}$ under the isomorphism given by \eqref{eq: isomorfismo S, Scapcap}, namely $\varepsilon_{_S} (x)(\varphi) = \varphi (x)$, for every $\varphi\in\widehat{S}$ and $x\in S$. 
The map $\varepsilon_{_S} $ is obviously a homomorphism of bisemilattices and a bijection from $\I\setminus\{0,1\}$ to $\widehat{\widehat{\I}}\setminus\{\Phi_0, \Phi_{1}\}$, where by $\Phi_0, \Phi_1$ we indicate the constants in $\widehat{\widehat{\I}}$. This map can be extended to a bijection from $\I$ to $\widehat{\widehat{\I}}$, by setting $\varepsilon_{_S}(0) = \Phi_{0}$ and $\varepsilon_{_S}(1) = \Phi_{1}$. We have to prove that $\Phi_{0}$ and $\Phi_{1}$ indeed play the role of the constants in $\widehat{\widehat{\I}}$ and that $\varepsilon_{_S} $ also preserves involution. We start with the latter task: 
$$(\neg\varepsilon_{_S} (x))(\varphi) = (\varepsilon_{_S} (x)(\neg\varphi))' = (\neg\varphi(x))' = (\varphi(x'))'' = \varphi(x'). $$ 
Regarding the constants, we only need to prove that $\neg\Phi_{0} =\Phi_{1} $ and $\Psi + \Phi_{0} = \Psi$, for each $\Psi \in \widehat{\widehat{\I}}$. Indeed, for any $\varphi\in\widehat{\I}$, one has: 
$$\neg\Phi_{0} (\varphi) = \neg\varepsilon_{_S}(0)(\varphi) = \varphi (0') = \varphi (1) = \varepsilon_{_S}(1)(\varphi) = \Phi_{1} (\varphi). $$ 
Finally, due to the surjectivity of $\varepsilon_{S}$, for any $\Psi\in\widehat{\widehat{\I}}$, there exists $x\in I$ such that $\Psi = \varepsilon_{S}(x) $. Therefore
$\Psi (\varphi) = \varepsilon_{S}(x)(\varphi) = \varepsilon_{S}(x+0)(\varphi) = \varphi(x + 0) = \varphi(x) + \varphi(0) = \varepsilon_{S}(x)(\varphi) + \varepsilon_{S}(0)(\varphi) = (\Psi + \Phi_{0})(\varphi)  $ and we are done. 

\end{proof}

%

In order to prove Theorem \ref{th: stron-inv SA equivalente a GR inv} we are only left with proving that the functors $\mathrm{Hom_{b}(-,\WK)}: \mathfrak{IBSL}\to\mathfrak{IGR} $ and $\mathrm{Hom_{_{GR}}(-,\WK)}: \mathfrak{IGR}\to\mathfrak{IBSL} $ are controvariant (we consider just the first functor as for the other the proof runs analogously).
\begin{proposition}\label{prop: i funtori sono controvarianti}
Let $f:\I \to \mathbf{L}$ be a morphism of $\mathfrak{IBSL}$, then it induces a morphism of $\mathfrak{IGR}$ $f^{\ast}: \widehat{\mathbf{L}}\to\widehat{\I}$, where $\widehat{\mathbf{L}} $, $\hat{\I}$ are the dual spaces of $\mathbf{L}$ and $\I$, respectively.
\end{proposition}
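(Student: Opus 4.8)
The plan is to define the induced map $f^{\ast}$ in the only reasonable way, namely by precomposition, and then verify it lands in the right place and preserves all the required structure. Given a morphism $f\colon \I\to\mathbf{L}$ of $\mathfrak{IBSL}$, I would set, for every $\varphi\in\widehat{\mathbf{L}}=\mathrm{Hom_{b}}(\mathbf{L},\3)$,
\[
f^{\ast}(\varphi) := \varphi\circ f.
\]
The first thing to check is that $f^{\ast}(\varphi)$ actually belongs to $\widehat{\I}$: since $f$ is a homomorphism of involutive bisemilattices, its restriction to the bisemilattice reducts is a homomorphism of bisemilattices, so $\varphi\circ f$ is a composite of bisemilattice homomorphisms into $\3$ and hence lies in $\widehat{\S}=\mathrm{Hom_{b}}(\mathbf{S},\3)$. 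This already shows $f^{\ast}$ maps $\widehat{\mathbf{L}}$ into $\widehat{\I}$ as a map between the underlying GR spaces.

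Next I would verify that $f^{\ast}$ is a morphism of $\mathfrak{IGR}$, i.e. a morphism of GR spaces that additionally preserves the involution. That $f^{\ast}$ preserves $\ast$, the constants and the order follows from Gierz--Romanowska duality (Theorem \ref{th: dualita Romanowska}): precomposition with a bisemilattice homomorphism is precisely the action of the contravariant hom-functor $\mathrm{Hom_{b}}(-,\3)$ on arrows, so $f^{\ast}$ is automatically a $\mathfrak{GR}$-morphism. The genuinely new content is preservation of the involution $\neg$, where $\neg$ is the operation defined on the dual spaces by $\neg\varphi(x)=(\varphi(x'))'$. I would verify $f^{\ast}(\neg\varphi)=\neg(f^{\ast}\varphi)$ by a direct pointwise computation: for $x\in I$,
\[
f^{\ast}(\neg\varphi)(x) = (\neg\varphi)(f(x)) = (\varphi(f(x)'))' = (\varphi(f(x')))' = ((\varphi\circ f)(x'))' = \neg(f^{\ast}\varphi)(x),
\]
where the crucial middle step uses that $f$ preserves the involution, i.e. $f(x)'=f(x')$.

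Finally, to justify the claim that the assignment $f\mapsto f^{\ast}$ is contravariant (the stated goal of completing Theorem \ref{th: stron-inv SA equivalente a GR inv}), I would note that $\mathrm{id}_{\I}^{\ast}=\mathrm{id}_{\widehat{\I}}$ and that $(g\circ f)^{\ast}=f^{\ast}\circ g^{\ast}$, both of which are immediate from the definition by precomposition and the associativity of function composition. I do not expect any serious obstacle here: the whole statement is essentially the observation that precomposition is functorial, with the only non-formal ingredient being the involution-preservation identity above, which reduces entirely to the hypothesis that $f$ commutes with $'$ together with the definition of $\neg$ on duals. The main point worth stating carefully is simply that $f^{\ast}(\varphi)$ remains a legitimate element of $\widehat{\I}$, since closure of the dual under the induced involution was already handled in Lemma \ref{lemma: chiusura di S^ rispetto all'involuzione}.
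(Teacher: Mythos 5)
Your proof is correct and takes essentially the same approach as the paper: $f^{\ast}$ is defined by precomposition, the GR-space morphism conditions are delegated to the Gierz--Romanowska duality of Theorem \ref{th: dualita Romanowska}, and the only substantive check is pointwise preservation of the involution. If anything, your displayed computation makes explicit the step $f(x)'=f(x')$ that the paper's own one-line verification leaves implicit.
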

\begin{proof}
$f^{\ast}$ is defined in the usual way, i.e. $f^{\ast}(\widehat{j})(i) = \widehat{j}(f(i))$, for each $i\in\I$ and $\widehat{j}\in\widehat{\mathbf{J}}$. It suffices to prove that $f^{\ast}$ preserves involution, namely $f^{\ast}(\neg\widehat{j}) = \neg f^{\ast}(\widehat{j})$, for all $j\in J$: 
 $$(\neg f^{\ast}(\widehat{j}))(i) = \neg \widehat{j}(f(i)) = f^{\ast}(\neg\widehat{j})(i),$$
\end{proof}

Surprisingly enough, we have established that semilattice inverse systems of Stone spaces are nothing but GR spaces with involution. 
\begin{theorem}\label{th: stron-inv SA equivalente a GR inv}
The categories of GR spaces with involution and $\cIBSL$ are dually equivalent. \end{theorem}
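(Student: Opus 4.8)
The plan is to exhibit the dual equivalence through the two contravariant functors $\mathrm{Hom_{b}}(-,\WK)\colon\cIBSL\to\mathfrak{IGR}$ and $\mathrm{Hom_{_{GR}}}(-,\WK)\colon\mathfrak{IGR}\to\cIBSL$, assembling the object- and morphism-level facts already established in this section. Since the underlying involution-free reducts are governed by the Gierz and Romanowska duality of Theorem \ref{th: dualita Romanowska}, the only genuinely new content concerns the compatibility of everything in sight with the involution $\neg$.

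First I would verify that both assignments are well defined on objects and morphisms. On objects this is exactly Lemma \ref{lemma: chiusura di S^ rispetto all'involuzione} (for $\mathbf{I}\in\IBSL$ the dual $\widehat{\I}=\pair{\widehat{\S},\neg}$ is a GR space with involution) and Lemma \ref{lem: chiusura di A^ rispetto all'involuzione} (for a GR space with involution $\G$ the dual $\widehat{\G}=\pair{\widehat{\A},\neg}$ is an involutive bisemilattice). On morphisms, Proposition \ref{prop: i funtori sono controvarianti} shows that a $\cIBSL$-morphism $f$ induces a $\mathfrak{IGR}$-morphism $f^{\ast}$, with the symmetric argument handling the other direction. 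I would then observe that preservation of identities and of composition is inherited verbatim from Theorem \ref{th: dualita Romanowska} applied to the reducts, because the formula $f^{\ast}(\widehat{j})(i)=\widehat{j}(f(i))$ does not involve $\neg$; hence both assignments are bona fide contravariant functors.

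Next I would produce the two natural isomorphisms witnessing $\mathrm{Hom_{_{GR}}}(-,\WK)\circ\mathrm{Hom_{b}}(-,\WK)\cong\mathrm{id}_{\cIBSL}$ and $\mathrm{Hom_{b}}(-,\WK)\circ\mathrm{Hom_{_{GR}}}(-,\WK)\cong\mathrm{id}_{\mathfrak{IGR}}$. For the first I would take $\varepsilon_{_S}$ of \eqref{eq: isomorfismo S, Scapcap}, extended to the constants exactly as in the proof of Lemma \ref{lemma: chiusura di S^ rispetto all'involuzione}; that proof already checks it is an isomorphism of involutive bisemilattices, i.e. that it preserves $+$, $\cdot$, the constants $\Phi_0,\Phi_1$, and the involution. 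For the second I would take $\delta_{_A}$ of \eqref{eq: isomorfismo A, Acapcap}, which Proposition \ref{prop: isomorfismo G, G cap cap} shows to be an isomorphism of GR spaces with involution.

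The one point not already spelled out, and the step I expect to be the main obstacle, is the \emph{naturality} of $\varepsilon$ and $\delta$: for every $\cIBSL$-morphism $f\colon\I\to\mathbf{L}$ the square relating the components of $\varepsilon$ at $\I$ and at $\mathbf{L}$ with the double dual $f^{\ast\ast}$ must commute, and dually for $\delta$ in $\mathfrak{IGR}$. I would settle this by a direct diagram chase: unwinding $f^{\ast}(\widehat{j})(i)=\widehat{j}(f(i))$ and $\varepsilon_{_S}(x)(\varphi)=\varphi(x)$ shows that both legs send $x$ to evaluation at $f(x)$, so the naturality square for the reducts commutes by Theorem \ref{th: dualita Romanowska}; it then only remains to confirm that these maps respect $\neg$ and hence stay inside $\mathfrak{IGR}$ and $\cIBSL$, which is precisely the content recorded in Proposition \ref{prop: isomorfismo G, G cap cap} and in the involution computation inside the proof of Lemma \ref{lemma: chiusura di S^ rispetto all'involuzione}. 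Collecting these observations yields the asserted dual equivalence.
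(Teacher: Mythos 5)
Your proposal is correct and follows essentially the same route as the paper, which likewise assembles Lemma \ref{lem: chiusura di A^ rispetto all'involuzione} and Lemma \ref{lemma: chiusura di S^ rispetto all'involuzione} (the latter containing the double-dual isomorphism $\varepsilon_{_S}$ with the constants $\Phi_0,\Phi_1$), Proposition \ref{prop: isomorfismo G, G cap cap}, and Proposition \ref{prop: i funtori sono controvarianti} into the duality given by the contravariant functors $\mathrm{Hom_{b}}(-,\WK)$ and $\mathrm{Hom_{_{GR}}}(-,\WK)$. Your explicit naturality check for $\varepsilon$ and $\delta$ is the one step the paper leaves implicit, and your diagram chase showing both legs send $x$ to evaluation at $f(x)$ settles it correctly.
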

\begin{corollary}\label{corollario principale}
The category Sem-inv-$\SA$ is equivalent to the category of GR spaces with involution.
\end{corollary}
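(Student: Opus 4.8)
The plan is to derive the equivalence directly by composing the two dualities already at hand, exploiting the elementary fact that two categories each dually equivalent to a common third one are equivalent to one another. Concretely, Theorem \ref{th: dualita ibsl} supplies controvariant functors $\mathcal{F}_1\colon\text{Sem-inv-}\SA\to\cIBSL$ and $\mathcal{G}_1\colon\cIBSL\to\text{Sem-inv-}\SA$ with $\mathcal{G}_1\circ\mathcal{F}_1=id$ and $\mathcal{F}_1\circ\mathcal{G}_1=id$, while Theorem \ref{th: stron-inv SA equivalente a GR inv} supplies controvariant functors $\mathcal{F}_2\colon\mathfrak{IGR}\to\cIBSL$ and $\mathcal{G}_2\colon\cIBSL\to\mathfrak{IGR}$ (where $\mathfrak{IGR}$ is the category of GR spaces with involution of Definition \ref{def: categoria IGR}) whose compositions are the respective identities.

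First I would set $\mathcal{H}:=\mathcal{G}_2\circ\mathcal{F}_1\colon\text{Sem-inv-}\SA\to\mathfrak{IGR}$ and $\mathcal{K}:=\mathcal{G}_1\circ\mathcal{F}_2\colon\mathfrak{IGR}\to\text{Sem-inv-}\SA$. Since the composite of two controvariant functors is covariant, both $\mathcal{H}$ and $\mathcal{K}$ are covariant functors, which is exactly what Definition \ref{def: equivalenza di categorie} requires for an equivalence (as opposed to a duality).

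It then remains to check that $\mathcal{K}\circ\mathcal{H}$ and $\mathcal{H}\circ\mathcal{K}$ are the identity functors. Using associativity of functor composition together with $\mathcal{F}_2\circ\mathcal{G}_2=id_{\cIBSL}$ and $\mathcal{F}_1\circ\mathcal{G}_1=id_{\cIBSL}$, the middle category cancels:
\[
\mathcal{K}\circ\mathcal{H}=\mathcal{G}_1\circ(\mathcal{F}_2\circ\mathcal{G}_2)\circ\mathcal{F}_1=\mathcal{G}_1\circ\mathcal{F}_1=id,\qquad
\mathcal{H}\circ\mathcal{K}=\mathcal{G}_2\circ(\mathcal{F}_1\circ\mathcal{G}_1)\circ\mathcal{F}_2=\mathcal{G}_2\circ\mathcal{F}_2=id.
\]
There is essentially no obstacle here: the statement is the formal fact that two categories dually equivalent to a common third category are equivalent, and the only point demanding care is bookkeeping the variances, i.e. composing each duality in the order that lets the intermediate identity on $\cIBSL$ collapse the middle term, and observing that the strict equalities of functors (rather than mere natural isomorphisms) provided by the two theorems are preserved under this composition.
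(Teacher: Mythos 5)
Your proof is correct and matches the paper's (implicit) argument exactly: the corollary is stated without further proof precisely because it follows by composing the duality of Theorem \ref{th: dualita ibsl} with that of Theorem \ref{th: stron-inv SA equivalente a GR inv}, the two controvariant functors yielding covariant ones whose composites collapse via the identities on $\cIBSL$. Your remark on the strict equalities of Definition \ref{def: equivalenza di categorie} is the right bookkeeping point, and nothing more is needed.
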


Corollary \ref{corollario principale} highlights an interesting as well as unexpected topological properties of Stone spaces. Indeed the  category of (semilattice) inverse systems of Stone spaces which deals with a possibly infinite family of them can be described by a specific class of topological spaces, namely GR spaces with involution.


\section{Final comments and remarks}\label{sec: commenti}

It is natural to wonder whether the content of Theorem \ref{th: stron-inv SA equivalente a GR inv} may be extended to other algebraic categories admitting topological duals such as bisemilattices and GR spaces.  Indeed, recall that bisemilattices are P\l onka sums of distributive lattices, according to the following
\begin{theorem}\cite[Th. 3]{Plo67a}\label{th: rappresentazione Plonka bisemilattices}
An algebra $\B$ is a bisemilattice iff it is the P\l onka sum over a semilattice direct system of distributive lattices.  
\end{theorem}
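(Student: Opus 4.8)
The plan is to establish the two implications separately, organised around the fact that the variety of bisemilattices is exactly the regularization of the variety $\DL$ of distributive lattices, and that the P\l onka sum is the construction that realises regularizations.

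For the implication from right to left, suppose $\B = \PL(\pair{\D_i, \varphi_{ii'}, I})$ is the P\l onka sum over a semilattice direct system of distributive lattices. I would verify that $\B$ satisfies the bisemilattice axioms, i.e. that $+$ and $\cdot$ are each idempotent, commutative, associative and that they distribute over one another. Each of these is a \emph{regular} identity (the same variables occur on both sides) holding in every summand $\D_i$, and the general theory of P\l onka sums guarantees that such identities lift to the whole sum; I would make this concrete by unfolding the definition of the P\l onka operations, computing $a + b$ for $a\in A_i$, $b\in A_j$ inside $\D_{i\vee j}$ after transporting the arguments along the bonding morphisms, and invoking the functoriality $\varphi_{i'i''}\circ\varphi_{ii'}=\varphi_{ii''}$. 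The point worth stressing is that absorption is \emph{not} regular and indeed fails in general, which is precisely why the P\l onka sum is a genuine bisemilattice rather than a lattice.

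For the converse --- the substantial direction --- I would reconstruct a semilattice direct system intrinsically from a given bisemilattice $\B=\pair{B,+,\cdot}$. The key device is the P\l onka partition operation
\[
a\triangleleft b := a + (a\cdot b) = a\cdot(a + b),
\]
the two expressions coinciding by distributivity and idempotency. First I would check that $\triangleleft$ obeys P\l onka's partition identities: $a\triangleleft a = a$, associativity $(a\triangleleft b)\triangleleft c = a\triangleleft(b\triangleleft c)$, left normality $a\triangleleft(b\triangleleft c)=a\triangleleft(c\triangleleft b)$, and compatibility with the fundamental operations such as $(a+b)\triangleleft c = (a\triangleleft c)+(b\triangleleft c)$ and $c\triangleleft(a+b)=(c\triangleleft a)\triangleleft b$; since the variety is generated by $\3$, each such identity reduces to a finite verification on $\3$ (or to a direct manipulation with distributivity, commutativity and idempotency). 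I would then define $a\equiv b$ to mean $a\triangleleft b = a$ and $b\triangleleft a = b$, prove it is a congruence, and take $I:=B/{\equiv}$ with join $[a]\vee[b]:=[a+b]$. Here I would use the identity $a+b+a\cdot b = a+b$, which again holds because $\3$ generates the variety, to see that $a+b\equiv a\cdot b$, so that the join is well defined and $I$ is a join semilattice. Each block $\D_{[a]}$ is a subalgebra on which $\equiv$ forces absorption $a + a\cdot b = a$, hence is a distributive lattice, and setting $\varphi_{[a][b]}(x):=x\triangleleft y$ for any $y$ in the target block yields the bonding morphisms.

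The step I expect to be the main obstacle is the final reconstruction: showing that $\B$ is isomorphic to $\PL(\pair{\D_i, \varphi_{ii'}, I})$, i.e. that the ambient operations coincide with those induced by the system. Concretely, for $a\in A_i$, $b\in A_j$ and $k=i\vee j$ one must verify $a+b=\varphi_{ik}(a)+\varphi_{jk}(b)$ together with its multiplicative dual; for the join this unwinds to $a+b=(a+a\cdot b)+(b+a\cdot b)=a+b+a\cdot b$, which is exactly the identity isolated above, and the meet case is handled dually. Proving well-definedness and functoriality of the maps $\varphi_{ii'}$ from the partition identities, and then matching them against the ambient operations, is where essentially all the work concentrates; once this is in place the identity map on $B$ is the desired isomorphism and both inclusions are complete.
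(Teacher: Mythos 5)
Your proposal is correct, but there is nothing in the paper to compare it against: the paper gives no proof of this theorem, quoting it directly from P\l onka's 1967 paper on distributive quasilattices (\cite[Th.~3]{Plo67a}). What you have written is, in substance, a reconstruction of P\l onka's original argument: the partition operation $a\triangleleft b = a + a\cdot b = a\cdot(a+b)$ (the two forms agree by distributivity and idempotence, as you say), verification of the partition identities, the replica congruence $\equiv$, fibers as distributive lattices (absorption $a\cdot(a+b)=a$ holds exactly for $a,b$ in a common block, which is what makes each block a lattice), bonding maps $x\mapsto x\triangleleft y$, and the final matching $a+b=(a+a\cdot b)+(b+a\cdot b)$ with its multiplicative dual via $a\cdot b\cdot(a+b)=a\cdot b$. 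The details you defer do all go through: independence of the choice of $y$ in the target block follows from left normality, since $x\triangleleft y = x\triangleleft(y\triangleleft y')=x\triangleleft(y'\triangleleft y)=x\triangleleft y'$ when $y\equiv y'$; functoriality of the bonding maps follows from associativity of $\triangleleft$; and your appeal to $\3$ generating the variety is legitimate (the paper cites Kalman for it), though every identity you need is regular and checkable by direct equational manipulation, which keeps the argument self-contained as in P\l onka's original. One small friction with this paper's conventions is worth flagging: Definition \ref{def: direct system} here requires the index semilattice to have a least element, whereas your replica $I=B/{\equiv}$ of an arbitrary bisemilattice need not have one (take the free join semilattice on two generators, viewed as a bisemilattice with the two operations coinciding: its replica is itself, with no bottom). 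This is an imprecision in the paper's statement rather than a gap in your proof; as the paper's own footnote observes, the lower bound on the index set is needed only for types with constants, and P\l onka's theorem for the constant-free type of bisemilattices is stated for pointless index semilattices.
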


A \emph{Priestley space} is an ordered topological space, i.e. a set $X$ equipped with a partial order $\leq$ and a topology $\tau$, such that $\langle X, \tau \rangle$ is compact and, for $x\nleq y $ there exists a clopen up-set $U$ such that $x\in U$ and $y\not\in U$. The category of Priestley spaces, $\mathfrak{PS}$, is the category whose objects are Priestley spaces and morphisms are continuos maps preserving the ordering. 

The category of Priestley spaces is the dual of the category of distributive lattices \cite{Priestley72}, \cite{Priestley84}. 


Let us call $\mathfrak{BSL}$ the category of bisemilattices (objects are bisemilattices, morphisms homomorphisms of bisemilattices). It follows from our analysis and Theorem \ref{th: rappresentazione Plonka bisemilattices} that the objects in $\mathfrak{BSL}$ are the same as in Sem-dir-$\mathfrak{DL}$, where $\mathfrak{DL}$ stands for the category of distributive lattices.  We claim that the two categories of $\mathfrak{BSL}$ and Sem-dir-$\mathfrak{DL}$ are indeed equivalent. This can be shown using the same strategy applied in Section \ref{sec: dualita}.
\begin{lemma}\label{lemma: omomorfismi tra bisemireticoli}
Let $ \mathbf{L} $ and $ \mathbf{M} $ be two bisemilattices, the P\l onka sums over the semilattice direct systems of distributive lattices $\mathbb{L}= \langle L_i, \varphi_{i,i'}, I \rangle $ and  $ \mathbb{M}=\langle M_j, \varphi_{j,j'}, J \rangle $, and let $ h:\mathbf{L} \rightarrow \mathbf{M} $ be a homomorphism. Then, for any $ i\in I $, there exists a $j\in J $ such that $ h(L_i)\subseteq M_{j} $. \\
Moreover, there exists a semilattice homomorphism $ \varphi:I\rightarrow J $, for every homorphism $ h: \PL(\mathbb{L})\rightarrow\PL(\mathbb{M}) $, $ h(A_{i})\subseteq B_{\varphi(i)} $.
\end{lemma}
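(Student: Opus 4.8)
The plan is to mirror, almost verbatim, the strategy used for involutive bisemilattices in Section~\ref{sec: dualita}, in particular the proofs of Lemma~\ref{lem: omomorfismi tra ibsl} and Lemma~\ref{lem: omomorfismo degli indici}, replacing Boolean algebras by distributive lattices. First I would fix an index $i\in I$ and an element $a\in L_i$, and observe that since $h\colon\mathbf L\to\mathbf M$ is a homomorphism of bisemilattices, $h(a)$ lands in some component $M_j$ of the P\l onka sum $\PL(\mathbb M)$, because every element of $\PL(\mathbb M)$ lies in exactly one $M_j$ (the universe is the disjoint union). The task is then to show that the component $j$ depends only on $i$ and not on the chosen $a\in L_i$, so that $h(L_i)\subseteq M_j$ for a single $j$.

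The key step is to find an element, or a combination of operations, that is canonically associated with the component $L_i$ and is preserved by $h$. In the Boolean case this was the bottom (and top) element of each Boolean component, detected via $a\wedge a'=0$; here there is no negation, so I would instead exploit the fact that within a single distributive-lattice component $L_i$ both semilattice operations restrict to the lattice operations of $L_i$, and that for $a,b$ in the \emph{same} component one has $a\cdot b$ and $a+b$ again in that component, whereas for $a\in L_i$, $b\in L_{i'}$ with $i\neq i'$ the P\l onka sum forces $a\cdot b,a+b\in L_{i\vee i'}$. Concretely, given $a,b\in L_i$, I would compute that $h(a\cdot b)=h(a)\cdot h(b)$ and $h(a+b)=h(a)+h(b)$ must lie in a single component, namely the join of the components of $h(a)$ and $h(b)$; but $a\cdot b\in L_i$ so $h(a\cdot b)$ sits in the component $\varphi_h(i)$ determined by $a\cdot b$. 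Matching these two descriptions forces the components of $h(a)$ and $h(b)$ to coincide, which yields that all of $h(L_i)$ lands in one component $M_{\varphi_h(i)}$, and that $h_{|L_i}$ is then a genuine lattice homomorphism $L_i\to M_{\varphi_h(i)}$ since it preserves $\cdot$ and $+$ and these are the lattice meet and join inside the components.

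For the ``moreover'' part I would argue exactly as in Lemma~\ref{lem: omomorfismo degli indici}: taking $a_1\in L_i$ and $a_2\in L_{i'}$, the P\l onka-sum definition gives $a_1\cdot a_2\in L_{i\vee i'}$, while $h(a_1)\in M_{\varphi(i)}$ and $h(a_2)\in M_{\varphi(i')}$ force $h(a_1\cdot a_2)=h(a_1)\cdot h(a_2)\in M_{\varphi(i)\vee\varphi(i')}$; since also $h(a_1\cdot a_2)\in M_{\varphi(i\vee i')}$ and components are disjoint, necessarily $\varphi(i\vee i')=\varphi(i)\vee\varphi(i')$, so $\varphi$ is a semilattice homomorphism.

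The main obstacle, and the only genuinely new point relative to the involutive case, is the \emph{well-definedness} of the component assignment $i\mapsto\varphi_h(i)$ without recourse to distinguished constants: distributive lattices in this signature need not have a bottom or top element that $h$ could track, so I cannot simply copy the $h(0_{A_i})=0_{B_j}$ argument. I expect the cleanest way around this is the operation-based argument above, using that $h(a\cdot b)$ for $a,b\in L_i$ is simultaneously forced into the component of $a\cdot b$ (which is $i$) and into the join of the components of $h(a)$ and $h(b)$; this pins down a unique target component and is robust to the absence of constants. Once well-definedness is secured, the remaining verifications are routine and structurally identical to the Boolean case.
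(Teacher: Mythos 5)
Your reduction of the lemma to the well-definedness of the component assignment $i\mapsto\varphi_h(i)$ is the right framing, and your treatment of the ``moreover'' part coincides with the paper's (which simply repeats the argument of Lemma~\ref{lem: omomorfismo degli indici}). But the central step, the well-definedness itself, is not established by your argument, and as stated it is circular. You write that ``$a\cdot b\in L_i$, so $h(a\cdot b)$ sits in the component $\varphi_h(i)$ determined by $a\cdot b$'': there is no such component yet, since the claim that the image of an element of $L_i$ lies in a component depending only on $i$ is exactly what is to be proved. All you actually know is that $h(a)\in M_j$ and $h(b)\in M_{j'}$ for some $j,j'\in J$, whence $h(a\cdot b)=h(a)\cdot h(b)\in M_{j\vee j'}$. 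Even if you christen that component $\varphi_h(i)$, the resulting equation $\varphi_h(i)=j\vee j'$ does not force $j=j'$: knowing where $h(a\cdot b)$ lands says nothing, by itself, about where $h(a)$ and $h(b)$ land.

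The missing idea is to apply $h$ to an identity of $L_i$ whose \emph{output is one of the inputs}, so that $h(a)$ itself appears as a mixed term and the disjointness of the components can bite. The paper does this via a case split on comparability: if $a\leq b$ in $L_i$, then $a\cdot b=a$ and $a+b=b$ (inside a component the operations are the lattice meet and join), so $h(a)=h(a\cdot b)=h(a)\cdot h(b)\in M_{j\vee j'}$; since $h(a)\in M_j$ and distinct components are disjoint, $j=j\vee j'$, and similarly $h(b)=h(a+b)\in M_{j\vee j'}$ gives $j'=j\vee j'$, hence $j=j'$. Incomparable $a,b$ are then handled through the comparable pairs $a\cdot b\leq a,b\leq a+b$. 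Equivalently, and without any case split, you could use absorption, which holds inside each component: from $a=a\cdot(a+b)$ one gets $h(a)=h(a)\cdot\bigl(h(a)+h(b)\bigr)\in M_{j\vee j'}$, hence $j=j\vee j'$, and from $b=b\cdot(a+b)$ one gets $j'=j\vee j'$. Either of these repairs your step; without one of them the well-definedness claim, which you rightly single out as the only non-routine point, remains unsupported.
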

\begin{proof}
Let $ a,b\in L_i $: we claim that $h(a)$, $h(b)\in\M_j$, for some $j\in J$. Two cases may arise: either $a,b$ are comparable with respect to the order $\leq $ of $ L_i $ or they are not. Suppose $ a $ and $ b $ are comparable: let $ a\leq b $ and suppose that $ h(a)\in M_{j} $, $ h(b)\in M_{j'} $ with $ j\neq j' $. Then, $ h(a) = h(a\wedge b) = h(a)\wedge h(b)\in M_{j\vee j'} $ (by definition of operations in the P\l onka sum), therefore $ j= j\vee j'$. On the other hand, $ h(b) = h(a\vee b) = h(a)\vee h(b)\in  M_{j\vee j'} $. Thus $ j=j' $. 

The case of $ b <\ a $ can be proved anologously. 

Suppose now that $ a $ is not comparable with $ b $, namely $ a\not\leq b $ and $ b\not\leq a $. Clearly $ a\wedge b\leq a\vee b $, hence, reasoning as above, $ h(a\vee b) $ and $ h(a\wedge b) $ will belong to the same $ M_j $ for some $ j\in J $. Now, both $ a $ and $ b $ are comparable with $ a\wedge b $ and $ a\vee b $, hence necessarily $ h(a)\in M_j $ and $h(b)\in M_j $. Therefore $h(L_i)\in M_j$. 

The proof of the second statement runs analogously as for Lemma \ref{lem: omomorfismo degli indici}.
\end{proof}

\begin{remark}\label{rem: Lemma nel caso dei reticoli}
It is not difficult to check that the statement of Lemma \ref{lemma: morfismi tra somme di Plonka} can be proven analogously when considering semilattice direct systems of distributive lattices, instead of Boolean algebras, and morphisms between them.
\end{remark}

As consequence of Theorem \ref{th: rappresentazione Plonka bisemilattices}, Lemma \ref{lemma: omomorfismi tra bisemireticoli} and Remark \ref{rem: Lemma nel caso dei reticoli}, we get
\begin{theorem}\label{theo: BSL e equivalente a Sem-dir-DL}
The category $\mathfrak{BSL}$ is equivalent to \emph{Sem-dir-}$\mathfrak{DL}$.
\end{theorem}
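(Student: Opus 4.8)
The plan is to follow, almost verbatim, the strategy used to establish Theorem~\ref{th: IBSL e Sem-dir-BA sono equivalenti}, replacing Boolean algebras by distributive lattices and the P\l onka representation of Theorem~\ref{the: rappresentazione Plonka} by that of Theorem~\ref{th: rappresentazione Plonka bisemilattices}. Concretely, I would exhibit a pair of covariant functors
\[
\mathcal{F}\colon\mathfrak{BSL}\to\text{Sem-dir-}\mathfrak{DL},\qquad \mathcal{G}\colon\text{Sem-dir-}\mathfrak{DL}\to\mathfrak{BSL},
\]
and check that $\mathcal{G}\circ\mathcal{F}=id_{\mathfrak{BSL}}$ and $\mathcal{F}\circ\mathcal{G}=id_{\text{Sem-dir-}\mathfrak{DL}}$, which is exactly what Definition~\ref{def: equivalenza di categorie} requires for an equivalence.

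On objects, $\mathcal{F}$ assigns to a bisemilattice $\mathbf{L}$ the semilattice direct system of distributive lattices $\mathbb{L}=\langle L_i,\varphi_{ii'},I\rangle$ furnished (uniquely) by Theorem~\ref{th: rappresentazione Plonka bisemilattices}, so that $\mathbf{L}\cong\PL(\mathbb{L})$; conversely $\mathcal{G}(\mathbb{L}):=\PL(\mathbb{L})$. On morphisms, given a bisemilattice homomorphism $h\colon\mathbf{L}\to\mathbf{M}$, I set $\mathcal{F}(h):=(\varphi_h,h_{|L_i})$. Here Lemma~\ref{lemma: omomorfismi tra bisemireticoli} supplies precisely the two facts needed: that $h$ carries each component $L_i$ into a single component $M_{\varphi_h(i)}$, so that the restrictions $h_{|L_i}$ are homomorphisms of distributive lattices, and that the induced index map $\varphi_h\colon I\to J$ is a semilattice homomorphism. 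To confirm that $\mathcal{F}(h)$ is genuinely a morphism of Sem-dir-$\mathfrak{DL}$ one verifies, for $i\leq i'$, the commuting square that defines such morphisms; its commutativity is forced by the compatibility of $h$ with the transition maps, exactly as in the proof of Theorem~\ref{th: IBSL e Sem-dir-BA sono equivalenti}. In the opposite direction, a morphism $(\varphi,f_i)$ of semilattice direct systems induces a bisemilattice homomorphism $\mathcal{G}(\varphi,f_i):=h$ defined componentwise by $h(a):=f_i(a)$ for $a\in L_i$; that this $h$ respects all bisemilattice operations is the content of Remark~\ref{rem: Lemma nel caso dei reticoli}, the lattice-theoretic analogue of Lemma~\ref{lemma: morfismi tra somme di Plonka}.

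Functoriality (preservation of identities and composites) is immediate from the pointwise definitions, and the two composition identities reduce to the \emph{uniqueness} of the P\l onka representation: $\mathcal{G}(\mathcal{F}(\mathbf{L}))=\PL(\mathbb{L})\cong\mathbf{L}$ and $\mathcal{F}(\mathcal{G}(\mathbb{L}))=\mathbb{L}$, word for word as in the involutive case.

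The only step that is not a purely formal transcription of the Boolean argument is Lemma~\ref{lemma: omomorfismi tra bisemireticoli}, and this has already been dealt with above; it is where I expect the genuine obstacle to sit. In the Boolean setting one shows $h(L_i)\subseteq M_j$ using complements, via $h(0_{L_i})=h(a\wedge a')=h(a)\wedge h(a)'=0_{M_j}$, but distributive lattices have no complements. One must instead argue through comparability: for comparable $a\leq b$ one forces $h(a)$ and $h(b)$ into the same component by inspecting $h(a\wedge b)$ and $h(a\vee b)$, and the delicate case of incomparable $a,b$ is reduced to this by passing to $a\wedge b\leq a\vee b$. Once this lemma and Remark~\ref{rem: Lemma nel caso dei reticoli} are secured, no further difficulty arises and the equivalence follows.
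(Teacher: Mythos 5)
Your proposal is correct and takes essentially the same route as the paper, which derives the theorem precisely as a consequence of Theorem \ref{th: rappresentazione Plonka bisemilattices}, Lemma \ref{lemma: omomorfismi tra bisemireticoli} and Remark \ref{rem: Lemma nel caso dei reticoli}, transcribing the proof of Theorem \ref{th: IBSL e Sem-dir-BA sono equivalenti} with distributive lattices in place of Boolean algebras. You also correctly isolate the one non-formal step: the paper's Lemma \ref{lemma: omomorfismi tra bisemireticoli} replaces the complement-based argument by exactly your comparability argument, reducing incomparable $a,b$ to the comparable case via $a\wedge b\leq a\vee b$.
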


Using Priestley duality and Theorem \ref{teorema: duality Sem-dir Sem-inv} we have
\begin{theorem}\label{th: dualita bsl}
The categories \emph{Sem-inv-}$\mathfrak{PS}$ and $ \mathfrak{BSL} $ are dually equivalent.
\end{theorem}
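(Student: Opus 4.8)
The plan is to mirror the argument already developed for involutive bisemilattices in Section \ref{sec: dualita}, replacing Boolean algebras by distributive lattices and Stone spaces by Priestley spaces throughout. The logical skeleton is entirely parallel: we have a P\l onka sum representation (Theorem \ref{th: rappresentazione Plonka bisemilattices}), an equivalence between the algebraic category and the category of semilattice direct systems of the component algebras (Theorem \ref{theo: BSL e equivalente a Sem-dir-DL}), and an underlying duality between the component algebras and their dual spaces (Priestley duality). The abstract lifting machinery of Theorem \ref{teorema: duality Sem-dir Sem-inv} then does the rest. So the proof is essentially a two-line composition of results that are all available in the excerpt.

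First I would invoke Priestley duality: the category $\mathfrak{DL}$ of distributive lattices is dually equivalent to the category $\mathfrak{PS}$ of Priestley spaces \cite{Priestley72,Priestley84}. Next I would apply Theorem \ref{teorema: duality Sem-dir Sem-inv}, which states that whenever two categories $\mathfrak{C}$ and $\mathfrak{D}$ are dually equivalent, then Sem-dir-$\mathfrak{C}$ and Sem-inv-$\mathfrak{D}$ are dually equivalent. Taking $\mathfrak{C}=\mathfrak{DL}$ and $\mathfrak{D}=\mathfrak{PS}$ yields that Sem-dir-$\mathfrak{DL}$ and Sem-inv-$\mathfrak{PS}$ are dually equivalent. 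Finally I would compose this with Theorem \ref{theo: BSL e equivalente a Sem-dir-DL}, which gives an equivalence (covariant) between $\mathfrak{BSL}$ and Sem-dir-$\mathfrak{DL}$. Composing a covariant equivalence with a contravariant (dual) equivalence produces a contravariant equivalence, hence $\mathfrak{BSL}$ and Sem-inv-$\mathfrak{PS}$ are dually equivalent, which is exactly the claim.

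Concretely, the functors realizing the duality are the composites $\widetilde{\mathcal{F}}\circ\mathcal{F}\colon \mathfrak{BSL}\to\text{Sem-inv-}\mathfrak{PS}$ and $\mathcal{G}\circ\widetilde{\mathcal{G}}$ in the reverse direction, where $\mathcal{F},\mathcal{G}$ are the functors of Theorem \ref{theo: BSL e equivalente a Sem-dir-DL} and $\widetilde{\mathcal{F}},\widetilde{\mathcal{G}}$ are the lifted functors of Lemma \ref{lemma: funtori Sem-dir Sem-inv} built from the Priestley duality functors. That the round-trip composites are identities follows immediately since each individual pair already composes to the identity on its respective category.

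I do not expect any genuine obstacle here, since every ingredient has been assembled earlier: this theorem is explicitly flagged in the final section as following ``using Priestley duality and Theorem \ref{teorema: duality Sem-dir Sem-inv}''. The only point demanding a sliver of care is the bookkeeping of variance when composing the equivalences — one must check that composing the covariant equivalence of Theorem \ref{theo: BSL e equivalente a Sem-dir-DL} with the contravariant equivalence from Corollary-style application of Theorem \ref{teorema: duality Sem-dir Sem-inv} indeed yields a contravariant, rather than covariant, equivalence. This is routine. The more substantive work, namely establishing Theorem \ref{theo: BSL e equivalente a Sem-dir-DL}, has already been carried out via Lemma \ref{lemma: omomorfismi tra bisemireticoli} and Remark \ref{rem: Lemma nel caso dei reticoli}, so the present statement is genuinely a short corollary-type argument.
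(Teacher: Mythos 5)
Your proposal is correct and matches the paper's own argument exactly: the paper obtains Theorem \ref{th: dualita bsl} precisely by composing Priestley duality with the lifting result of Theorem \ref{teorema: duality Sem-dir Sem-inv} and the equivalence of Theorem \ref{theo: BSL e equivalente a Sem-dir-DL}. Your additional remark on variance bookkeeping is sound but routine, as you note, and the paper leaves it implicit.
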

As the category of GR spaces is the dual category of $\mathfrak{BSL}$ (see Theorem \ref{th: dualita Romanowska}), this means that Sem-inv-$\mathfrak{PS}$ are equivalent to a single class of spaces, namely 
\begin{corollary}\label{cor: Sem-inv-PS sono GR spaces}
The category \emph{Sem-inv-}$\mathfrak{PS}$ is equivalent to the category of GR spaces.
\end{corollary}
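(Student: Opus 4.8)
The plan is to exploit the fact that both categories named in the statement are dually equivalent to one and the same category, namely the category $\mathfrak{BSL}$ of bisemilattices, and to conclude by composing the two dualities. This is exactly the pattern already used to derive Corollary \ref{corollario principale} from Theorems \ref{th: dualita ibsl} and \ref{th: stron-inv SA equivalente a GR inv}. The first thing I would record is the harmless identification $\mathfrak{DB}=\mathfrak{BSL}$: the category of bisemilattices appearing in the Gierz--Romanowska duality (Theorem \ref{th: dualita Romanowska}) has the same objects and the same morphisms as the category $\mathfrak{BSL}$ introduced in this section. Once this is noted, Theorem \ref{th: dualita Romanowska} provides a pair of contravariant functors $\mathrm{Hom_{b}}(-,\3)\colon\mathfrak{BSL}\to\mathfrak{GR}$ and $\mathrm{Hom_{_{GR}}}(-,\3)\colon\mathfrak{GR}\to\mathfrak{BSL}$ witnessing that $\mathfrak{BSL}$ and $\mathfrak{GR}$ are dually equivalent, with $\mathrm{Hom_{b}}(-,\3)\circ\mathrm{Hom_{_{GR}}}(-,\3)=id_{\mathfrak{GR}}$ and the symmetric identity.

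Next I would invoke Theorem \ref{th: dualita bsl}, which furnishes contravariant functors, say $\mathcal{P}\colon\mathfrak{BSL}\to\text{Sem-inv-}\mathfrak{PS}$ and $\mathcal{Q}\colon\text{Sem-inv-}\mathfrak{PS}\to\mathfrak{BSL}$, satisfying $\mathcal{Q}\circ\mathcal{P}=id_{\mathfrak{BSL}}$ and $\mathcal{P}\circ\mathcal{Q}=id_{\text{Sem-inv-}\mathfrak{PS}}$. The key step is then to form the composites $\mathrm{Hom_{b}}(-,\3)\circ\mathcal{Q}\colon\text{Sem-inv-}\mathfrak{PS}\to\mathfrak{GR}$ and $\mathcal{P}\circ\mathrm{Hom_{_{GR}}}(-,\3)\colon\mathfrak{GR}\to\text{Sem-inv-}\mathfrak{PS}$. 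Since the composition of two contravariant functors is covariant, these are covariant functors, so they are candidates for an equivalence in the sense of Definition \ref{def: equivalenza di categorie}.

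It then remains to check that these two covariant functors are mutually inverse, which is a direct computation using associativity of functor composition together with the two pairs of identities above; for instance $(\mathrm{Hom_{b}}(-,\3)\circ\mathcal{Q})\circ(\mathcal{P}\circ\mathrm{Hom_{_{GR}}}(-,\3))=\mathrm{Hom_{b}}(-,\3)\circ(\mathcal{Q}\circ\mathcal{P})\circ\mathrm{Hom_{_{GR}}}(-,\3)=\mathrm{Hom_{b}}(-,\3)\circ\mathrm{Hom_{_{GR}}}(-,\3)=id_{\mathfrak{GR}}$, and symmetrically for the other composite. By Definition \ref{def: equivalenza di categorie} this yields the asserted equivalence between Sem-inv-$\mathfrak{PS}$ and the category of GR spaces.

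There is no genuine obstacle here: the result is a purely formal consequence of possessing two dualities with the common middle term $\mathfrak{BSL}$, and the strict notion of (dual) equivalence adopted in Definition \ref{def: equivalenza di categorie}, requiring the composites to equal the identity functor on the nose, makes the composites \emph{literally} identities, so that no natural-isomorphism or coherence bookkeeping is required. The single point deserving a word of care is the identification $\mathfrak{DB}=\mathfrak{BSL}$ mentioned above; once that is granted, the argument is immediate and mirrors verbatim the proof of Corollary \ref{corollario principale}.
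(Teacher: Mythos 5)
Your proposal is correct and follows exactly the paper's own route: the paper obtains the corollary by composing the duality of Theorem \ref{th: dualita bsl} (Sem-inv-$\mathfrak{PS}$ dual to $\mathfrak{BSL}$) with the Gierz--Romanowska duality of Theorem \ref{th: dualita Romanowska} ($\mathfrak{GR}$ dual to $\mathfrak{DB}=\mathfrak{BSL}$), just as you do. Your write-up merely makes explicit the formal bookkeeping (covariance of the composite functors and the strict identity composites under Definition \ref{def: equivalenza di categorie}) that the paper leaves implicit.
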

\begin{center}
\textbf{Acknowledgments}
\end{center}
The first author acknowledges the support of the Horizon 2020 program of the European Commission: SYSMICS project, Proposal Number: 689176, MSCA-RISE-2015. The work of the first author is also supported by the Italian Ministry of Scientific Research (MIUR)  within the PRIN project ``Theory of Rationality: logical, epistemological and computational aspects.'' The second author was  supported by Prin 2015, Real and Complex Manifolds: Geometry, Topology and Harmonic Analysis (Italy). Finally, the authors thank Anna Romanowska and Jos\'e Gil-F\'erez for their valuable suggestions on the topics treated in the paper.


\end{document}